\tikzstyle{vertex}=[circle,fill=white,draw,inner sep=0pt,minimum size=5pt]
\tikzstyle{vortex}=[circle,fill=lightgray,draw,inner sep=0pt,minimum size=5pt]
\tikzstyle{vartex}=[circle,fill=black,draw,inner sep=0pt,minimum size=5pt]
\tikzstyle{bvertex}=[circle,fill=white,draw,inner sep=0pt,minimum size=3pt]
\tikzstyle{bvortex}=[circle,fill=lightgray,draw,inner sep=0pt,minimum size=3pt]
\tikzstyle{bvartex}=[circle,fill=black,draw,inner sep=0pt,minimum size=3pt]
\numberwithin{equation}{section}
\numberwithin{figure}{section}
\theoremstyle{plain}
\newtheorem{Thm}{Theorem}[section]
\newtheorem{Lemma}[Thm]{Lemma}
\newtheorem{Cor}[Thm]{Corollary}
\newtheorem*{Claim}{Claim}
\theoremstyle{definition}
\newtheorem{Defn}[Thm]{Definition}
\newtheorem{Ques}[Thm]{Question}
\newtheorem*{Def*}{Definition}
\theoremstyle{remark}
\newtheorem*{Remark}{Remark}
\newtheorem*{Remarks}{Remarks}
\let \seq=\subseteq
\let \To=\Rightarrow
\let \al=\alpha
\let \be=\beta
\let \ga=\gamma
\let \De=\Delta
\let \si=\sigma
\let \Si=\Sigma
\let \f=\varphi
\let \p=\ldots
\let \h=\text
\let \cd=\cdots
\let \cov=\vartriangleleft
\let \voc=\vartriangleright
\let \i=\iota
\let \t=\theta
\let \pa=\partial
\newcommand{\WLOGtwo}{without loss of generality}
\newcommand{\Th}{Theorem}
\newcommand{\Ths}{Theorems}
\newcommand{\Co}{Corollary}
\newcommand{\Pro}{Proposition}
\newcommand{\Def}{Definition}
\newcommand{\Exa}{Example}
\newcommand{\Con}{Conjecture}
\newcommand{\Z}{\mathbf{Z}}
\newcommand{\ol}{\overline}
\newcommand{\id}{\mathrm{id}}
\DeclareMathOperator{\rk}{rk}
\newcommand{\Br}{\mathrm{Br}}
\newcommand{\J}{\mathfrak{I}}
\newcommand{\twist}{\J(\t)}
\newcommand{\twid}{\i(\t)}
\newcommand{\xy}{x'}
\newcommand{\dx}{\De_{xyz}}
\newcommand{\ddx}{\De_{\xy}'}
\newcommand{\deldx}{\del_{\dx}(\{x,y,z\})}
\newcommand{\delddx}{\del_{\ddx}(\{\xy\})}
\newcommand{\xh}{(x,\hat1)}
\newcommand{\yh}{(y,\hat1)}
\newcommand{\zh}{(z,\hat1)}
\newcommand{\dxh}{\De\xh}
\newcommand{\dyh}{\De\yh}
\newcommand{\dzh}{\De\zh}
\newcommand{\dxy}{\De(x,y)}
\newcommand{\join}{*}
\newcommand{\2}{\mathbf{2}}
\newcommand{\fib}{\mathcal{F}}
\newcommand{\hgt}{\mathrm{ht}}
\DeclareMathOperator{\cl}{cl}
\DeclareMathOperator{\lk}{lk}
\DeclareMathOperator{\del}{del}
\title{Topology of posets with special partial matchings}
\author{Nancy Abdallah \and Mikael Hansson \and Axel Hultman}
\thanks{}
\address{Department of Mathematics, Link\"oping University, SE-581 83 Link\"oping, Sweden}
\email{nancy.abdallah@liu.se}
\email{mikael.hansson@liu.se}
\email{axel.hultman@liu.se}
\begin{document}

\begin{abstract}
Special partial matchings (SPMs) are a generalisation of Brenti's special matchings.
Let a \emph{pircon} be a poset in which every non-trivial principal order ideal is finite and admits an SPM. Thus pircons generalise Marietti's zircons.
We prove that every open interval in a pircon is a PL ball or a PL sphere.
It is then demonstrated that Bruhat orders on certain twisted identities and quasiparabolic $W$-sets constitute pircons.
Together, these results extend a result of Can, Cherniavsky, and Twelbeck, prove a conjecture of Hultman, and confirm a claim of Rains and Vazirani.
\end{abstract}

\maketitle

\section{Introduction} \label{se:intro}

A special matching on a poset is a complete matching of the Hasse diagram satisfying certain extra conditions.
The concept was introduced by Brenti~\cite{Brenti_intersection}.
For eulerian posets, an equivalent notion was also independently introduced by du Cloux~\cite{du_Cloux}.
Their main motivation was to provide an abstract framework in which to study the Bruhat order on a Coxeter group.
Namely, every non-trivial lower interval in the Bruhat order admits a special matching.
Thus, Bruhat orders provide examples of \emph{zircons}, posets in which every non-trivial principal order ideal is finite and has a special matching.
Beginning with Marietti~\cite{Marietti}, zircons have been the focal point of a lot of attention; see, e.g., \cite{C-M,Hultman_zircon,Marietti_coxeter}.
Notably, (the order complex of) any open interval in a zircon is a PL sphere;
this is essentially a result of du Cloux~\cite[\Co~3.6]{du_Cloux}, which is based on results from Dyer's thesis~\cite{Dyer_thesis}.
Reading~\cite{Reading} provided a different proof.\footnote{Although Reading worked in the context of Bruhat orders, his proof is valid in the more general zircon setting.}

In \cite{A-H}, two of the present authors generalised the special matching concept to special partial matchings (SPMs), which are not necessarily complete matchings satisfying similar conditions.
Generalising zircons, let us say that a \emph{pircon} is a poset in which every non-trivial principal order ideal is finite and admits an SPM.
These notions, too, are originally motivated by Coxeter group theory:
the dual of the Bruhat order on the fixed point free involutions in the symmetric group is a pircon~\cite{A-H}.
This is generalised considerably in Section~\ref{se:coxeter}, where it is demonstrated that the Bruhat order on the twisted identities $\twid$ is a pircon whenever the involution $\t$ has the so-called NOF property.
Moreover, Bruhat orders on Rains and Vazirani's~\cite{R-V} quasiparabolic $W$-sets (under a boundedness assumption) form pircons.
In particular, this applies to all parabolic quotients of Coxeter groups.

We investigate the topology of posets with SPMs.
Our first main result roughly states that an SPM provides a way to ``lift'' the PL ball or sphere property from a subinterval; this is \Th~\ref{th:main}.
It follows that every open interval in a pircon is a PL ball or a PL sphere, which is our second main result.
In particular, this proves a conjecture from \cite{Hultman3} on Bruhat orders on twisted identities, and confirms a claim from \cite{R-V} about quasiparabolic $W$-sets.

The overall proof strategy is inspired by that of Reading's aforementioned proof in \cite{Reading}.
Roughly, if $P$ is a poset with minimum $\hat0$, maximum $\hat1$, and an SPM $M$,
we prove that $P$ can be obtained from the interval $[\hat0,M(\hat1)]$ using certain modifications.
Investigating the effect of these modifications on the poset topology forms the technical backbone of the paper.

The remainder of the paper is structured in the following way.
In the next section, we recall basic definitions and review some useful results from the literature.
Then, in Section~\ref{se:PL_tools}, we prove a couple of elementary lemmas that later serve as the main topological tools.
In Section~\ref{se:zip}, ways to locally modify posets, including a version of Reading's ``zippings'' from \cite{Reading}, are studied.
It is shown that these modifications preserve the PL ball or sphere property.
After that, in Section~\ref{se:spm}, we recall the definition of an SPM and prove that a poset which admits an SPM can be obtained from one which in some sense is easier to understand, using the modifications studied in the previous section.
Combining the results of the previous two sections, the main results follow essentially at once; this is the content of Section~\ref{se:main}.
In Section~\ref{se:coxeter}, we explain how examples of pircons are provided by Bruhat orders, first on twisted identities and second on quasiparabolic $W$-sets in Coxeter groups. The implications of our second main result in these contexts are discussed. Finally, in the last section, we raise some open questions.

\section{Preliminaries} \label{se:prel}

In this section, preliminary material on posets (partially ordered sets) and topology of simplicial complexes is gathered.

\subsection{Posets} \label{sse:posets}

Let $P$ be a poset.
If $P$ contains an element denoted $\hat0$ or $\hat1$, it is assumed to be a minimum or a maximum, respectively, i.e., $x \geq \hat0$ and $x \leq \hat1$ for all $x \in P$.
The \emph{proper part} of $P$ is then $\ol{P}=P-\{\hat0,\hat1\}$.

Standard interval notation is employed for posets. Thus, if $x,y \in P$, then
\[
[x,y]=\{z \in P \mid x \leq z \leq y\},
\]
with the induced order from $P$, and similarly for open and half-open intervals.

An \emph{order ideal} $J \seq P$ is an induced subposet closed under going down, i.e., $x \leq y \in J \To x \in J$.
The complement of an order ideal is called an \emph{order filter}. An order ideal is \emph{principal} if it has a maximum.
For principal order ideals, the notation $P_{\leq y}=\{x \in P \mid x \leq y\}$ is convenient.
Similarly, $P_{<y}$, $P_{\geq y}$, and $P_{>y}$ are defined in the obvious way.

Suppose every principal order ideal in $P$ is finite. If, for any $y \in P$, all maximal chains (totally ordered subsets) in $P_{\leq y}$ have the same number of elements, $P$ is called \emph{graded}.
In this case, there is a unique \emph{rank function}, i.e., a function $\rk: P \to \{0,1,\p\}$ such that $\rk(x)=0$ if $x$ is minimal, and $\rk(y)=\rk(x)+1$ if $y$ covers $x$.

Suppose $\pi: P \to P'$ is an order-preserving map of posets.
Then $\pi$ is called an \emph{order projection} if for every ordered pair $x' \leq_{P'} y'$ in $P'$ there exist $x \leq_P y$ in $P$ such that $\pi(x)=x'$ and $\pi(y)=y'$.
In particular, any order projection is surjective. We construct the quotient $\fib_\pi$ as follows.
The elements of $\fib_\pi$ are the fibres $\pi^{-1}(x')=\{x \in P \mid \pi(x)=x'\}$ for $x' \in P'$.
A relation on $\fib_\pi$ is given by $F_1 \leq_{\fib_\pi} F_2$ if $x \leq_P y$ for some $x \in F_1$ and $y \in F_2$.
This is a partial order if $\pi$ is an order projection. We then call $\fib_\pi$ the \emph{fibre poset}. It is isomorphic to $P'$:

\begin{Lemma}[\mbox{\cite[\Pro~1.1]{Reading}}] \label{le:fibre}
If $\pi: P \to P'$ is an order projection, then $\fib_\pi$ and $P'$ are isomorphic posets.
\end{Lemma}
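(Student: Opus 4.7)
The plan is to exhibit an explicit candidate isomorphism $\phi: \fib_\pi \to P'$ defined by $\pi^{-1}(x') \mapsto x'$, and to verify along the way that $\leq_{\fib_\pi}$ is genuinely a partial order. Because $\pi$ is surjective (an immediate consequence of the order projection axiom, applied to the trivial relation $x' \leq_{P'} x'$) and its non-empty fibres partition $P$, the map $\phi$ is well-defined and bijective.

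The substantive step is establishing the equivalence
\[
F_1 \leq_{\fib_\pi} F_2 \iff \phi(F_1) \leq_{P'} \phi(F_2).
\]
The forward direction uses only that $\pi$ is order-preserving: if $x \in F_1$ and $y \in F_2$ witness the left-hand side, then $\pi(x) \leq_{P'} \pi(y)$. The reverse direction is precisely what the order projection hypothesis provides: $\phi(F_1) \leq_{P'} \phi(F_2)$ supplies $x \in F_1$ and $y \in F_2$ with $x \leq_P y$, hence $F_1 \leq_{\fib_\pi} F_2$.

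With this equivalence in hand, the partial order axioms on $\fib_\pi$ transfer from those on $P'$ via $\phi$: reflexivity since $\phi(F) \leq_{P'} \phi(F)$; antisymmetry since $\phi$ is injective and $\leq_{P'}$ is antisymmetric; and transitivity by chaining the forward and reverse directions of the equivalence through transitivity in $P'$. Thus $\leq_{\fib_\pi}$ is indeed a partial order and $\phi$ is an order isomorphism. I do not expect any real obstacle here; the sole non-trivial ingredient is the lifting of relations guaranteed by the order projection axiom, with everything else amounting to routine unwinding of definitions.
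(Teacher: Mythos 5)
Your proof is correct. The paper itself does not supply a proof of this lemma but simply cites Reading's Proposition~1.1; your argument (the map $\pi^{-1}(x') \mapsto x'$, the forward direction from monotonicity of $\pi$, the reverse direction from the order projection axiom, and the transfer of the poset axioms through the resulting order-equivalence) is the standard one and matches Reading's.
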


\subsection{Simplicial complexes} \label{sse:complexes}

\emph{Throughout the present paper, all simplicial complexes are finite.} By convention, the empty set is considered to be a simplex of every non-void simplicial complex.
Given an (abstract) simplicial complex $\De$, we shall denote its geometric realisation (defined up to linear homeomorphism) by $\|\De\|$, a polyhedron in some real euclidean space. The simplices of $\De$ are sometimes called its \emph{faces}, and maximal faces are referred to as \emph{facets}.

For a face $\si \in \De$, the subcomplex
\[
\lk_\De(\si)=\{\tau \in \De \mid \h{$\si \cap \tau=\0$ and $\si \cup \tau \in \De$}\}
\]
is the \emph{link} of $\si$.

If $V$ is a set of vertices of $\De$, the \emph{deletion} of $V$ in $\De$ is the subcomplex
\[
\del_\De(V)=\{\si \in \De \mid \si \cap V=\0\}.
\]

The \emph{join} $\De \join \De'$ of two simplicial complexes $\De$ and $\De'$ is a new simplicial complex defined (up to isomorphism) as follows.
Suppose the vertex sets of $\De$ and $\De'$ are disjoint (otherwise, first replace $\De'$, say, by a suitable isomorphic copy), and let
\[
\De \join \De'=\{\si \cup \tau \mid \h{$\si \in \De$ and $\tau \in \De'$}\}.
\]

If $\mathcal{F}$ is a finite family of finite sets, $\cl(\mathcal{F})$ denotes the simplicial complex generated by $\mathcal{F}$, i.e.,
\[
\cl(\mathcal{F})=\{\si \mid \h{$\si \seq F$ for some $F \in \mathcal{F}$}\};
\]
it is called the \emph{closure} of $\mathcal{F}$.

Let $\si \prec \tau$ indicate that $\si \subset \tau$ and $\dim\si=\dim\tau-1$. If $\si \prec \tau$ and $\tau$ is the unique face (necessarily a facet) of $\De$ which properly contains $\si$, then the modification $\De \searrow \De-\{\si,\tau\}$ is an \emph{elementary collapse}.
A simplicial complex $\De$ is \emph{collapsible} if $\De \searrow \cd \searrow \0$.
Forman's discrete Morse theory~\cite{Forman} provides a convenient method to establish collapsibility.
The formulation in terms of matchings which we use here is due to Chari~\cite{Chari}; see also Forman~\cite{Forman2}.

A \emph{complete matching} on $\De$ is a function $\mu: \De \to \De$ which satisfies $\mu^2=\id$ and either $\si \prec \mu(\si)$ or $\mu(\si) \prec \si$ for all $\si \in \De$.
Then $\mu$ is \emph{acyclic} if
\[
\si_0 \prec \mu(\si_0) \succ \si_1 \prec \mu(\si_1) \succ \cd \prec \mu(\si_{t-1}) \succ \si_t
\]
with $\si_0 \neq \si_1$ implies that $\si_t \neq \si_0$.

\begin{Lemma}[\mbox{Forman~\cite{Forman}}] \label{le:forman}
A simplicial complex is collapsible if it has an acyclic complete matching.
\end{Lemma}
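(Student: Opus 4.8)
The plan is to prove this by induction on the number of faces of $\De$, using the acyclic complete matching to peel off one elementary collapse at a time and restricting the matching afterwards. The organising device is the \emph{modified Hasse diagram} $D_\mu$: the directed graph on the face set of $\De$ in which a covering relation $\si \prec \tau$ contributes the edge $\si \to \tau$ when $\mu(\si)=\tau$ and the edge $\tau \to \si$ otherwise. The first task is to show that acyclicity of $\mu$, in the sense of the displayed alternating-chain condition, is exactly what makes $D_\mu$ have no directed cycle. In a directed cycle of $D_\mu$ the dimension cannot strictly decrease all the way around, so the cycle uses at least one ``upward'' edge $\si \to \mu(\si)$; since the only edges leaving a face of the form $\mu(\si)$ (with $\si \prec \mu(\si)$) run downward to its subfacets other than $\si$, a short dimension count forces the cycle to alternate strictly, i.e.\ to have the shape $\si_0 \to \mu(\si_0) \to \si_1 \to \mu(\si_1) \to \cd \to \si_0$ with $\si_{i+1} \prec \mu(\si_i)$ and $\si_{i+1} \neq \si_i$ --- precisely the configuration forbidden by acyclicity of $\mu$.

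Now suppose $\De$ is non-void. Then $D_\mu$ is a non-empty finite acyclic digraph, so it has a source $\rho$, a face with no incoming edge. Reading off the local picture at $\rho$: having no incoming edge from a subfacet means no $\si \prec \rho$ has $\mu(\si)=\rho$, which by completeness of the matching forces $\mu(\rho)$ to be a \emph{cofacet} of $\rho$; having no incoming edge from a cofacet then forces $\mu(\rho)$ to be the \emph{unique} cofacet of $\rho$. Write $\tau=\mu(\rho)$. The key point is that a face with a unique cofacet has a unique proper superface: if $\nu \supsetneq \rho$ lies in $\De$, then $\rho \cup \{x\} \in \De$ is a cofacet of $\rho$ for each $x \in \nu \setminus \rho$, hence equals $\tau$, so $\nu \setminus \rho$ is a singleton and $\nu=\tau$. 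Thus $\De \searrow \De-\{\rho,\tau\}$ is an elementary collapse. Since $\De-\{\rho,\tau\}$ is obtained by deleting a single matched pair, $\mu$ restricts to a complete matching of it, and this restriction is still acyclic because an alternating chain in the smaller complex is one in $\De$; by the inductive hypothesis $\De-\{\rho,\tau\}$ is collapsible, and prepending the collapse $\De \searrow \De-\{\rho,\tau\}$ completes the step. The induction bottoms out at the void complex, and never stalls, since a non-void complex carrying a complete matching always supplies such a source $\rho$.

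I expect the step requiring the most care to be the passage in the first paragraph from the abstract acyclicity of $\mu$ to genuine acyclicity of $D_\mu$ --- the dimension count that pins the cycle down to the strictly alternating shape. The only other place where something substantive happens is the observation that a source of $D_\mu$ yields an honest elementary collapse rather than merely a ``free pair'', and this is exactly where the subset-closed nature of a simplicial complex enters, through the fact that a face with a single cofacet has exactly one proper superface. Everything else is routine bookkeeping about restricting matchings and tracking the drop in the number of faces.
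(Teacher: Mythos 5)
The paper does not prove this lemma; it is stated as a cited result of Forman~\cite{Forman} (with the matching reformulation attributed to Chari~\cite{Chari}), so there is no internal proof to compare against. Your argument is correct and is, in substance, the standard proof from that literature: reorient the Hasse diagram of the face poset along the matching to form $D_\mu$; show that a directed cycle in $D_\mu$, traced from a face of minimum dimension in the cycle, must strictly alternate between two consecutive dimensions and therefore produce a forbidden alternating chain with $\si_0 \neq \si_1$ and $\si_t = \si_0$; use finiteness of $\De$ to extract a source $\rho$, whose matched partner $\mu(\rho)$ is then its unique cofacet and hence (because $\De$ is closed under subsets) its unique proper superface; perform the resulting elementary collapse; and induct after restricting $\mu$. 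You correctly identify the ``unique cofacet implies unique proper superface'' step as the one place where the simplicial structure genuinely enters, and the remaining bookkeeping --- that $\De-\{\rho,\mu(\rho)\}$ is a subcomplex, that $\mu$ restricts to a complete matching on it, and that the restriction inherits acyclicity --- is handled correctly; note also that the standing convention that $\0\in\De$ for non-void $\De$ is what makes the induction terminate exactly at the void complex, matching the paper's definition of collapsibility.
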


Given a finite poset $P$, its \emph{order complex} $\De(P)$ is the simplicial complex whose faces are the chains in $P$.
In order to prevent proliferation of brackets when taking order complexes of poset intervals, we shall write $\dxy$ instead of $\De((x,y))$, $\De[x,y)$ instead of $\De([x,y))$, and so on.

\subsection{PL topology} \label{sse:topology}

Next, some notions from PL topology are reviewed. We refer to, e.g., \cite{Hudson} or \cite{Rourke-Sanderson} for this and much more information.

Suppose $\De$ and $\De'$ are simplicial complexes.
A continuous map $f: \|\De\| \to \|\De'\|$ is \emph{piecewise linear}, or \emph{PL}, if its graph is a euclidean polyhedron.
This is equivalent to there being simplicial subdivisions $\tilde{\De}$ and $\tilde{\De}'$ of $\De$ and $\De'$, respectively, with respect to which $f$ is a simplicial map of the corresponding triangulations of $\|\De\|$ and $\|\De'\|$.

Say that $\De$ and $\De'$ are \emph{PL homeomorphic} if there exists a PL homeomorphism $f: \|\De\| \to \|\De'\|$ (it follows that $f^{-1}$ is also PL).

A \emph{PL $d$-ball} is a simplicial complex which is PL homeomorphic to the simplicial complex $\De^d$ whose only facet is the $d$-dimensional simplex.
A \emph{PL $(d-1)$-sphere} is a simplicial complex which is PL homeomorphic to the simplicial complex obtained by removing the facet from $\De^d$.
In the PL category, balls and spheres behave as expected with respect to joins:

\begin{Lemma}[\mbox{\cite[Lemma~1.13]{Hudson}}] \label{le:join}
Let $B^d$ denote a PL $d$-ball and $S^d$ a PL $d$-sphere.
Then $B^k \join B^l \cong B^k \join S^l \cong B^{k+l+1}$ and $S^k \join S^l \cong S^{k+l+1}$, where $\cong$ means PL homeomorphic.
\end{Lemma}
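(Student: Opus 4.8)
The plan is to reduce everything to the standard simplicial models and compute the joins there. Recall that a PL $d$-ball is PL homeomorphic to $\De^d$ and a PL $d$-sphere to the complex $\partial\De^{d+1}$ obtained from $\De^{d+1}$ by deleting its facet. Two standard facts about the join (see \cite{Hudson,Rourke-Sanderson}) are needed. First, the simplicial join $\De\join\De'$ geometrically realises the polyhedral join $\|\De\|\join\|\De'\|$; consequently $\join$ induces a well-defined, commutative, and associative operation on PL homeomorphism classes, with $\{v\}\join\|\De\|$ realising the cone $\cone(\|\De\|)$. Second, if $f\colon\|\De\|\to\|\De'\|$ and $g\colon\|\Ga\|\to\|\Ga'\|$ are PL homeomorphisms, then $f\join g$ is again a PL homeomorphism: choosing subdivisions with respect to which $f$ and $g$ are simplicial, their simplicial join is simplicial, hence PL, with respect to the resulting subdivisions of $\De\join\Ga$ and $\De'\join\Ga'$. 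Granting these, inside any join a PL $d$-ball may be replaced by $\De^d$ and a PL $d$-sphere by $\partial\De^{d+1}$.

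The identity $B^k\join B^l\cong B^{k+l+1}$ is then immediate, since $\De^k\join\De^l$ is by definition the full simplex on the union of the two disjoint vertex sets, i.e.\ $\De^{k+l+1}$. For $B^k\join S^l\cong B^{k+l+1}$ with $k\geq1$, write $\De^k=\{v\}\join\De^{k-1}$ for a vertex $v$; associativity and commutativity give $B^k\join S^l\cong\De^{k-1}\join\bigl(\{v\}\join\partial\De^{l+1}\bigr)=\De^{k-1}\join\cone(\partial\De^{l+1})$, and taking the cone point to be the barycentre exhibits $\cone(\partial\De^{l+1})$ as the polyhedron $\|\De^{l+1}\|$ itself, so that $\De^{k-1}\join\cone(\partial\De^{l+1})\cong\De^{k-1}\join B^{l+1}\cong B^{k+l+1}$ by the first case; the case $k=0$ is the cone $\cone(S^l)\cong B^{l+1}$ directly.

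For $S^k\join S^l\cong S^{k+l+1}$ I would induct on $k$, uniformly in $l$. The base case $k=0$ identifies $S^0\join S^l$ with the suspension $\Si S^l=\cone(S^l)\cup_{S^l}\cone(S^l)$; each cone on $S^l$ is a PL $(l+1)$-ball by the computation just made, so $\Si S^l$ is obtained by gluing two PL $(l+1)$-balls along a PL homeomorphism of their boundary $l$-spheres, which produces a PL $(l+1)$-sphere. For the inductive step $k\geq1$, the base case gives $S^k\cong S^0\join S^{k-1}$, so $S^k\join S^l\cong S^0\join\bigl(S^{k-1}\join S^l\bigr)\cong S^0\join S^{k+l}\cong S^{k+l+1}$, using the inductive hypothesis and then the base case again.

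The step with genuine PL-topological content is the gluing assertion used in the base case: two PL $n$-balls glued along a PL homeomorphism of their boundary $(n-1)$-spheres form a PL $n$-sphere. I expect this, together with the more formal verification that $\join$ is well defined and multiplicative on PL homeomorphism classes, to be the main obstacle; the gluing assertion itself is standard and follows from the fact that a PL self-homeomorphism of $S^{n-1}$ extends over $B^n$ by coning, which reduces the glued space to the double of a PL $n$-ball, and the latter is a PL $n$-sphere.
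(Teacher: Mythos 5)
The paper provides no proof of this lemma; it is imported directly from Hudson~\cite[Lemma~1.13]{Hudson}, so there is no internal argument to compare yours against. Your proof is a correct and fairly standard way to establish the statement, but it is not obviously what Hudson does: the argument you give reduces the join lemma to (a) multiplicativity of $\join$ under PL homeomorphisms, (b) the cone-on-boundary-is-the-simplex observation, and (c) the gluing fact that two PL $n$-balls identified along their common boundary $(n-1)$-sphere form a PL $n$-sphere, i.e.\ Lemma~\ref{le:PL}(ii) of the paper. Hudson's route is more in the spirit of convex geometry, realising $B^k\join B^l$, $B^k\join S^l$, and $S^k\join S^l$ directly as (boundaries of) convex polyhedra in euclidean space and invoking radial projection; that avoids any appeal to the gluing lemma.

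Two remarks on rigour. First, since the paper cites Lemma~\ref{le:PL}(ii) independently (from Mandel's thesis), using it here is not circular within the paper's framework, but you should be aware that in a bottom-up development of PL topology the join lemma typically precedes the gluing lemma, so a self-contained proof would either argue as Hudson does or give a more careful proof of the $S^0\join S^l$ case directly (e.g.\ $S^0\join\pa\De^{l+1}$ is the boundary complex of a bipyramid over $\De^{l+1}$, hence a convex-polytope boundary, hence a PL $(l+1)$-sphere). Your closing sketch of the gluing lemma via ``doubles'' is the weakest part: the double of $\De^n$ is not literally a simplicial complex (two facets would share a vertex set), and making ``the double of a PL $n$-ball is a PL $n$-sphere'' precise essentially requires the same extension-over-a-ball machinery you are trying to invoke. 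Second, your justification that $f\join g$ is PL relies tacitly on the fact that the join of two subdivisions is a subdivision of the join; this is true for joinable realisations and is standard (it is in \cite{Rourke-Sanderson}), but it deserves an explicit citation rather than being folded into ``the resulting subdivisions.''
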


In particular, the cone over a PL $d$-ball or a PL $d$-sphere is a PL $(d+1)$-ball, since a cone is a join with the $0$-ball.

A \emph{PL $d$-manifold} is a simplicial complex satisfying that, for all $k \geq 0$, the link of every $k$-dimensional face is a PL $(d-1-k)$-ball or sphere.
If $\De$ is a PL $d$-manifold, its \emph{boundary} $\pa\De$ is the simplicial complex whose facets are the $(d-1)$-dimensional faces of $\De$ that are contained in only one facet of $\De$.
PL $d$-balls are PL $d$-manifolds with PL $(d-1)$-spheres as boundaries. PL $d$-spheres are PL $d$-manifolds without boundaries.

If $P$ is a finite poset with $\hat0$ and $\hat1$, every link in the order complex $\De(\ol{P})$ is a join of order complexes of open intervals in $P$.
Hence, by Lemma~\ref{le:join}, $\De(\ol{P})$ is a PL manifold if and only if $P$ is graded and $\dxy$ is a PL ball or sphere for every interval $(x,y) \neq (\hat0,\hat1)$ in $P$.

As we shall see, the next lemma opens up for inductive arguments. However plausible it seems, the first statement would be false without the PL condition.

\begin{Lemma} \label{le:PL}
${}$
\begin{itemize}
  \item[(i)]  If $\De_1$ and $\De_2$ are PL $d$-balls and $\De_1 \cap \De_2$ is a PL $(d-1)$-ball contained in $\pa\De_1 \cap \pa\De_2$, then $\De_1 \cup \De_2$ is a PL $d$-ball.
  \item[(ii)] If $\De_1$ and $\De_2$ are PL $d$-balls with $\De_1 \cap \De_2=\pa\De_1=\pa\De_2$, then $\De_1 \cup \De_2$ is a PL $d$-sphere.
\end{itemize}
\end{Lemma}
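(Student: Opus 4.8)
The plan is to derive both parts from two standard facts of PL topology, for which I would cite \cite{Hudson} or \cite{Rourke-Sanderson}: that every PL ball is PL homeomorphic to a simplex, and the \emph{extension theorem}, that a PL homeomorphism between the boundary spheres of two PL balls always extends to a PL homeomorphism of the balls themselves. The extension theorem follows from the first fact by radially coning the given boundary map (Alexander's trick), an operation that stays inside the PL category because the cone of a simplicial map, taken with respect to suitable subdivisions, is again simplicial. I would also use \emph{Newman's theorem}: the closure of the complement of a PL $(d-1)$-ball in a PL $(d-1)$-sphere is again a PL $(d-1)$-ball, meeting the original ball in their common boundary.

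For part (i), set $B=\Delta_1\cap\Delta_2$. Since $B$ is a PL $(d-1)$-ball lying in the PL $(d-1)$-sphere $\partial\Delta_i$, Newman's theorem shows that the closure $C_i$ of the complement of $B$ in $\partial\Delta_i$ is a PL $(d-1)$-ball with $B\cap C_i=\partial B=\partial C_i$. Now let $T_1$ and $T_2$ be two copies of the $d$-simplex glued along a common facet $F$ and realised on opposite sides of it, so that $T_1\cup T_2$ is a convex $d$-polytope and hence a PL $d$-ball. Fixing a PL homeomorphism $\alpha: B\to F$, I would first extend $\alpha|_{\partial B}$ across the complementary balls $C_i$, using Newman's theorem and the extension theorem, to a PL homeomorphism $\partial\Delta_i\to\partial T_i$ that carries $B$ onto $F$; then extend this once more across the $d$-balls to a PL homeomorphism $\phi_i: \Delta_i\to T_i$ with $\phi_i|_B=\alpha$. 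Since $\phi_1$ and $\phi_2$ agree on $\Delta_1\cap\Delta_2=B$, they glue to a PL homeomorphism $\Delta_1\cup\Delta_2\to T_1\cup T_2$, and hence $\Delta_1\cup\Delta_2$ is a PL $d$-ball.

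For part (ii), put $S=\partial\Delta_1=\partial\Delta_2=\Delta_1\cap\Delta_2$, a PL $(d-1)$-sphere, and compare $\Delta_1\cup\Delta_2$ with $\partial\Delta^{d+1}$. Choosing a facet $F_0$ of $\Delta^{d+1}$ and the opposite vertex $v_0$, one has $\partial\Delta^{d+1}=F_0\cup(v_0\join\partial F_0)$, where $v_0\join\partial F_0\cong B^0\join S^{d-1}$ is a PL $d$-ball by Lemma~\ref{le:join}, $F_0$ is a PL $d$-ball, and $F_0\cap(v_0\join\partial F_0)=\partial F_0=\partial(v_0\join\partial F_0)$. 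I would pick a PL homeomorphism $h_1: \Delta_1\to F_0$, observe that it restricts to a PL homeomorphism $S\to\partial F_0$, and apply the extension theorem to extend that restriction to a PL homeomorphism $h_2: \Delta_2\to v_0\join\partial F_0$. Then $h_1$ and $h_2$ agree on $\Delta_1\cap\Delta_2=S$ and glue to a PL homeomorphism $\Delta_1\cup\Delta_2\to\partial\Delta^{d+1}$, which is a PL $d$-sphere by definition.

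I expect the only genuinely non-formal ingredient to be Newman's theorem, invoked in part (i) to split the boundary sphere $\partial\Delta_i$ into $B$ and $C_i$; this is exactly where the PL hypothesis cannot be dropped, in accordance with the warning in the remark preceding the lemma (topologically, the union of two balls along a boundary ball can fail to be a ball, for instance because of wild embeddings). Everything else is routine bookkeeping: that $\Delta_1\cup\Delta_2$ really is a simplicial complex (it is the union of two subcomplexes of a common complex), that $T_1\cup T_2$, $F_0$ and $v_0\join\partial F_0$ have the boundaries claimed, and that a map which is PL on each of finitely many closed polyhedral pieces with matching overlaps is itself PL.
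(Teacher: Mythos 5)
Your proof is correct. The paper itself supplies no argument for this lemma; it merely cites Hudson~\cite[Cor.~1.28]{Hudson} for (i) and Mandel's thesis~\cite{Mandel_thesis} for (ii), so there is no in-paper proof to compare against. What you have done is reconstruct the standard argument from \cite{Hudson}: build an explicit combinatorial model with the right decomposition (two $d$-simplices sharing a facet for (i); $\partial\Delta^{d+1}$ split into a facet $F_0$ and the cone $v_0\join\partial F_0$ for (ii)), and then produce a global PL homeomorphism by gluing PL homeomorphisms of the pieces that have been forced to agree on the overlap via the boundary-extension theorem. All ingredients you invoke are either already recorded in the paper (Lemma~\ref{le:newman} and Lemma~\ref{Hudson}) or are classical, and the observation that a map which is PL on finitely many closed polyhedral pieces with matching overlaps is itself PL is correct and worth having been said.

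One small streamlining for part (i): the two-stage extension (first across the complementary boundary balls $C_i$, then across the $d$-balls) can be collapsed to a single application per $\Delta_i$ of the paper's Lemma~\ref{Hudson}, which already extends a PL homeomorphism $B\to\Delta^{d-1}$ defined on a $(d-1)$-ball $B\subset\partial\Delta_i$ directly to a PL homeomorphism $\Delta_i\to\Delta^{d}$. Applying it once to $\Delta_1$ and once to $\Delta_2$ (composing the second with the reflection of $\Delta^d$ across the supporting hyperplane of the chosen facet, so the two images lie on opposite sides) gives $\phi_1,\phi_2$ agreeing exactly on $B$, and since a homeomorphism of manifolds-with-boundary carries boundary to boundary, the images of $\Delta_1\setminus B$ and $\Delta_2\setminus B$ are automatically disjoint. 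This removes the appeal to Newman's theorem in (i). Your route is nonetheless valid, and arguably clearer about why the boundary sphere splits the way it does; the place where the PL hypothesis genuinely enters is then the extension theorem (and, in your version, also Newman's theorem), consistent with the paper's remark that (i) fails outside the PL category.
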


For a proof of (i), see \cite[\Co~1.28]{Hudson}. A proof of (ii) can be found in \cite{Mandel_thesis}.

Although the second sentence of the following result is rarely stated explicitly, it follows from, e.g., the first part of Hudson's proof; see \cite[\Th~1.26]{Hudson}.

\begin{Lemma}[Newman's theorem] \label{le:newman}
The closure of the complement of a PL $d$-ball embedded in a PL $d$-sphere is a PL $d$-ball. Moreover, the two balls have the same boundary.
\end{Lemma}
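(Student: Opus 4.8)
The plan is to reduce the general situation to a model one via regular neighbourhoods, essentially retracing Hudson's argument. Write $C=\ol{S-B}$ and fix a triangulation of $S$ in which $B$ is a subcomplex. The ``moreover'' statement is the easy half: since $S$ is a PL $d$-manifold without boundary, every $(d-1)$-face of $S$ lies in exactly two facets of $S$, and each such facet lies in exactly one of $B$, $C$; hence a $(d-1)$-face is contained in a single facet of $B$ precisely when it is contained in a single facet of $C$, so $\pa B$ and $\pa C$ have the same facets and therefore coincide. (Alternatively, once $C$ is known to be a PL $d$-ball, $S=B\cup C$ is a closed PL $d$-manifold expressed as a union of two PL $d$-manifolds with disjoint interiors, which forces $\pa B=\pa C$.) Thus the real content is that $C$ is a PL $d$-ball.

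I would first pass to a subdivision in which $B$ is a full subcomplex and replace $B$ by its derived neighbourhood $N$ in $S$. Being a regular neighbourhood of the PL $d$-ball $\lvert B\rvert$ inside the PL $d$-manifold $S$, $N$ is PL homeomorphic to $B$ with an external collar glued along $\pa B$; in particular $N$ is a PL $d$-ball, and $\ol{N-B}$ is PL homeomorphic to $\pa B\times[0,1]$, meeting $B$ along $\pa B\times\{0\}=\pa B$ and meeting $\ol{S-N}$ along $\pa B\times\{1\}=\pa N$. Consequently $C=\ol{S-N}\cup\ol{N-B}$ is obtained from $\ol{S-N}$ by attaching a collar along the whole of its boundary $(d-1)$-sphere $\pa N$, and is therefore PL homeomorphic to $\ol{S-N}$. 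It thus suffices to prove that $\ol{S-N}$ is a PL $d$-ball.

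For this I would use that $S$ is a sphere, not merely a manifold. Choose a PL homeomorphism identifying $S$ with $\pa\De^{d+1}$. By regular-neighbourhood uniqueness, a PL $d$-ball in $\pa\De^{d+1}$ may be carried by an ambient PL isotopy onto a chosen facet of $\De^{d+1}$ (a $d$-simplex); applying this to (the image of) $N$, we may assume $N$ is such a facet $F$. Then $\ol{S-N}$ corresponds to the closure of the complement of $F$ in $\pa\De^{d+1}$, which is the closed star of the vertex opposite $F$, namely the cone over the link of that vertex. Since that link is a PL $(d-1)$-sphere, the cone is a PL $d$-ball by Lemma~\ref{le:join} (and the remark following it). Hence $\ol{S-N}$ is a PL $d$-ball, and with the previous paragraph this finishes the proof.

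The step I expect to be the genuine obstacle is the one just invoked: that a PL $d$-ball inside a PL $d$-sphere can be engulfed into a single facet of a model triangulation. This is where the hypothesis that $B$ is a \emph{ball} — rather than merely that $\pa B$ is a PL $(d-1)$-sphere — is used in an essential way, and it is precisely the part of the argument carried by the regular-neighbourhood machinery. Everything else (the collar of $B$, the counting argument giving $\pa B=\pa C$, and the identification of the complement of a facet as a cone) is routine.
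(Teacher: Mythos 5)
The paper itself offers no proof of this lemma; it simply cites Hudson's Theorem~1.26 and remarks that the ``moreover'' clause follows from the first part of Hudson's argument. So there is no in-paper proof to compare against, only your reconstruction of the textbook argument. Your outline is essentially sound: the counting argument for $\pa B=\pa C$ is correct (using that in a closed PL $d$-manifold each $(d-1)$-face lies in exactly two facets, each of which lies in exactly one of $B$, $C$), and the reduction via a derived neighbourhood followed by an identification of the complement of a facet of $\pa\De^{d+1}$ as a cone over a $(d-1)$-sphere is the right shape of argument. One point where the wording is imprecise, and where a reader could reasonably object, is the engulfing step. You attribute to ``regular-neighbourhood uniqueness'' the statement that $N$ can be carried by an ambient PL isotopy onto a facet $F$; but uniqueness of regular neighbourhoods only applies to two regular neighbourhoods \emph{of the same polyhedron}. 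What you actually need is the disc theorem for codimension-zero PL balls in a connected PL manifold, and to deduce that from regular-neighbourhood theory you must first record that a PL $n$-ball sitting inside a PL $n$-manifold is a regular neighbourhood of any of its interior points (because it is a compact codimension-zero submanifold that collapses to that point, hence a regular neighbourhood by the characterisation theorem), then isotope the two base points together, and only then invoke uniqueness. None of this depends on Newman's theorem, so there is no circularity, but the chain of reasoning should be spelled out rather than collapsed into a single appeal to ``uniqueness.'' You correctly flag this step as the genuine obstacle; the missing ingredient is precisely the fact that PL balls are regular neighbourhoods of their interior points.
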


In particular, the deletion of a single vertex $v$ in a PL $d$-sphere is a PL $d$-ball, since it is the closure of the complement of a cone over the link of $\{v\}$.

\begin{Lemma}[\mbox{\cite[\Co~1.27]{Hudson}}] \label{Hudson}
If $A$ is a PL $d$-ball and $F$ is a PL $(d-1)$-ball contained in $\pa{A}$,
then any PL homeomorphism $\|F\| \to \|\De^{d-1}\|$ extends to a PL homeomorphism $\|A\| \to \|\De^d\|$.
\end{Lemma}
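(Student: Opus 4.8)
The plan is to build the extension in two stages: first extend $h$ from $\|F\|$ over the rest of the boundary sphere $\|\pa A\|$ so that it maps onto $\|\pa\De^d\|$, and then extend the resulting homeomorphism of boundary spheres inwards over all of $\|A\|$. Both stages rest on the same auxiliary fact, which I will call the \emph{extension principle}: any PL homeomorphism between the boundary spheres of two PL balls of the same dimension extends to a PL homeomorphism of the balls. Granting this principle for the moment, the argument runs as follows.

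First I would produce the complementary balls. Since $A$ is a PL $d$-ball, $\pa A$ is a PL $(d-1)$-sphere in which $F$ sits as a PL $(d-1)$-ball; by Newman's theorem (Lemma~\ref{le:newman}), the closure $F'$ of the complement of $F$ in $\pa A$ is a PL $(d-1)$-ball with $\pa F'=\pa F=F\cap F'$ and $F\cup F'=\pa A$. On the target side, fix a facet $\De^{d-1}$ of $\De^d$; the closure $G$ of the complement of $\De^{d-1}$ in $\pa\De^d$ is the closed star in $\pa\De^d$ of the opposite vertex, hence the cone over the PL $(d-2)$-sphere $\pa\De^{d-1}$. By Lemma~\ref{le:join}, $G$ is therefore a PL $(d-1)$-ball, and $\pa G=\pa\De^{d-1}=\De^{d-1}\cap G$ while $\De^{d-1}\cup G=\pa\De^d$.

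Next, since $h\colon\|F\|\to\|\De^{d-1}\|$ is a PL homeomorphism of PL $(d-1)$-balls, it carries $\|\pa F\|$ onto $\|\pa\De^{d-1}\|$; thus $h$ restricts to a PL homeomorphism $\|\pa F'\|\to\|\pa G\|$ of the boundary spheres of the PL balls $F'$ and $G$. By the extension principle this restriction extends to a PL homeomorphism $h'\colon\|F'\|\to\|G\|$. As $h$ and $h'$ agree on $\|F\cap F'\|=\|\pa F\|$, the two maps paste to a PL homeomorphism $\widetilde{h}\colon\|\pa A\|\to\|\pa\De^d\|$ satisfying $\widetilde{h}|_{\|F\|}=h$. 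Applying the extension principle once more --- this time to the boundary sphere $\|\pa A\|$ of the PL $d$-ball $A$ and to the boundary sphere $\|\pa\De^d\|$ of $\De^d$ --- we extend $\widetilde{h}$ to a PL homeomorphism $H\colon\|A\|\to\|\De^d\|$. Since $H|_{\|F\|}=\widetilde{h}|_{\|F\|}=h$, this $H$ is the desired extension.

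It remains to address the extension principle, which is where the real content lies. By definition a PL $n$-ball $B$ comes with a PL homeomorphism $\varphi\colon\|B\|\to\|\De^n\|$, so for two PL $n$-balls and a PL homeomorphism $g$ between their boundaries, conjugating by the respective $\varphi$'s reduces the task to extending a PL self-homeomorphism of $\|\pa\De^n\|$ over $\|\De^n\|$. For that one invokes the PL Alexander trick: coning such a self-homeomorphism from the barycenter of $\De^n$ yields a simplicial map with respect to the cones over suitably compatible subdivisions of the source and target copies of $\pa\De^n$, hence a PL homeomorphism of $\|\De^n\|$ restricting to the given map on $\|\pa\De^n\|$. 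The step that genuinely demands care is precisely this last point --- that coning preserves the PL property, equivalently that the join of a PL map with the identity map of a point is again PL; everything else is the pasting lemma for polyhedra together with the routine fact that PL homeomorphisms of PL manifolds with boundary carry boundary onto boundary (the same fact also underlies the assertion $h(\|\pa F\|)=\|\pa\De^{d-1}\|$ above).
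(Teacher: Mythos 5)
The paper does not prove Lemma~\ref{Hudson} itself; it is quoted directly from Hudson as \cite[Cor.~1.27]{Hudson}, so there is no in-paper argument to compare against. Your reconstruction is correct and is the standard route to this fact: the ``extension principle'' you isolate is the PL Alexander (cone) trick for boundary homeomorphisms of PL balls, and combining it with Newman's theorem (Lemma~\ref{le:newman}) --- first in dimension $d-1$ to carry $h|_{\pa F}$ across the complementary balls in $\pa A$ and $\pa\De^d$, then in dimension $d$ to push the resulting boundary-sphere homeomorphism into the interiors --- is precisely how the result is derived in the PL topology literature, including the step you rightly single out as the crux, namely that coning a simplicial homeomorphism from the barycentre over compatible triangulations is again simplicial and hence PL.
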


\begin{Lemma}[\mbox{Whitehead~\cite{Whitehead}; see also \cite[\Co~3.28]{Rourke-Sanderson}}] \label{le:whitehead}
A collapsible PL manifold is a PL ball.
\end{Lemma}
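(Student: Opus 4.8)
The statement is a classical theorem of Whitehead~\cite{Whitehead}, so rather than proving it from scratch the plan is to deduce it from the standard machinery of regular neighbourhoods in PL topology. Let $M$ be a collapsible PL $d$-manifold, say $M \searrow \cd \searrow \{v\}$ for some vertex $v$; we must show that $M \cong \De^d$.

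The deduction rests on two facts. First, a regular neighbourhood of a point $p$ in a PL $d$-manifold $N$ (that is, a closed subpolyhedron that is a neighbourhood of $p$ in $N$ and collapses to $p$) is always a PL $d$-ball: the closed star $\ol{\st}(p)$ of $p$ in a sufficiently fine iterated barycentric subdivision of $N$ is such a neighbourhood, and it equals the cone over $\lk(p)$; since $N$ is a PL $d$-manifold, $\lk(p)$ is a PL $(d-1)$-sphere if $p$ lies in the interior of $N$ and a PL $(d-1)$-ball if $p \in \pa N$, so $\ol{\st}(p)$ is a PL $d$-ball by Lemma~\ref{le:join}, and the uniqueness part of the Regular Neighbourhood Theorem extends this to \emph{every} regular neighbourhood of $p$ in $N$. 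Second, if a compact PL manifold $M$ collapses to a subpolyhedron $X$, then $M$ is a regular neighbourhood of $X$ in $M$. Granting these, we are done: $M$ is trivially a neighbourhood of $\{v\}$ in itself and $M \searrow \{v\}$, so $M$ is a regular neighbourhood of $\{v\}$ and hence a PL $d$-ball.

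The substance of the argument, and what I expect to be the main obstacle, lies in the second fact. To prove it one first develops regular neighbourhood theory to the point of having both existence and uniqueness (up to ambient PL isotopy) of regular neighbourhoods, and then argues by induction on the length of the collapse $M \searrow \cd \searrow X$: the key step is to show that $M$ is a regular neighbourhood of $M'$ whenever $M \searrow M'$ is a single elementary collapse, which one does by analysing a neighbourhood of the collapsed $d$-simplex while being careful that its free $(d-1)$-face lies on $\pa M$, and then transitivity of the regular-neighbourhood relation (itself a consequence of uniqueness) strings the elementary steps together. A more hands-on alternative would be to induct on $d$, split a boundary collar off $M$, and reassemble the pieces by means of Lemmas~\ref{le:PL} and~\ref{le:newman}; but the regular-neighbourhood route is the cleanest, and I would simply invoke \cite{Hudson} or \cite{Rourke-Sanderson} for the requisite PL facts.
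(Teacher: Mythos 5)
The paper does not give a proof of this lemma; it is cited as Whitehead's theorem, with \cite[\Co~3.28]{Rourke-Sanderson} as the textbook reference. What you have sketched is exactly the Rourke--Sanderson proof: (1) regular neighbourhoods of a point in a PL $d$-manifold are PL $d$-balls, via the closed-star model for a derived neighbourhood together with the uniqueness part of the Regular Neighbourhood Theorem, and (2) a compact PL manifold that collapses to a subpolyhedron $X$ is a regular neighbourhood of $X$ in itself, so that $M\searrow\{v\}$ forces $M$ to be a PL $d$-ball. Your outline is correct. One small presentational wrinkle: the parenthetical characterisation of ``regular neighbourhood'' you state in your first fact (compact subpolyhedron, neighbourhood of $p$, collapses to $p$) makes your second fact essentially tautological when applied to $N=M$ --- $M$ is trivially a neighbourhood of $v$ in itself and collapses to $v$ by hypothesis --- so the entire weight of the argument lands on the Simplicial Neighbourhood Theorem and the uniqueness of regular neighbourhoods, not on the induction over elementary collapses that you flag as ``the main obstacle.'' (That induction is what one uses to \emph{prove} the collapse-characterisation of regular neighbourhoods; once one has that characterisation, your step two is immediate.) Your instinct at the end, to invoke \cite{Hudson} or \cite{Rourke-Sanderson} for the requisite PL machinery rather than rebuild it, is precisely what the paper does by citing the result outright.
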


\section{PL topological tools} \label{se:PL_tools}

In this section, we develop elementary PL topological machinery that will serve as our toolbox in the proofs of the main results.

Let $\2$ denote the totally ordered, two-element poset $\{\al,\be\}$ where $\al<\be$.

\begin{Lemma} \label{le:collapsible}
If $P$ is a finite poset with $\hat0$ and $\hat1$, then $\De(\ol{P \times \2}-\{(\hat0,\be)\})$ is collapsible.
\end{Lemma}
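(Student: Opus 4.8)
The plan is to exhibit an explicit acyclic complete matching on $\De(\ol{P \times \2}-\{(\hat0,\be)\})$ and then invoke Lemma~\ref{le:forman}. Write $Q = \ol{P \times \2}-\{(\hat0,\be)\}$. The key structural observation is that $Q$ decomposes naturally into the ``bottom'' copy $\{(x,\al) \mid x \in \ol P\} \cup \{(\hat0,\al)\} \cup \{(\hat1,\al)\}$ (wait — more carefully, the elements of $P \times \2$ with second coordinate $\al$ that survive in the proper part) and the ``top'' copy with second coordinate $\be$; note that $(\hat0,\al)$ is the minimum of $P\times\2$ and $(\hat1,\be)$ is the maximum, so both are removed in the proper part, and $(\hat0,\be)$ is removed by hand. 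What remains is: all $(x,\al)$ with $x > \hat0$, together with all $(x,\be)$ with $x < \hat1$. Crucially, $(\hat1,\al)$ is present and lies above everything with second coordinate $\al$, and every $(x,\al)$ is below $(x,\be)$ when both survive.

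The matching I would use pairs each chain with the chain obtained by toggling membership of a distinguished element. Concretely, fix the element $(\hat1,\al)$. Wait — a cleaner choice: for a chain $\si$ in $Q$, consider its image under the projection to $\ol{P\times\2}$ and look at whether $(\hat1,\al) \in \si$. Actually the most robust approach is a standard ``element-matching'' along the product structure: for a chain $\si \subseteq Q$, let $m(\si)$ be the largest element of $\ol P$ such that exactly one of $(m,\al), (m,\be)$ lies in $\si$ (if such an $m$ exists and the toggle stays inside $Q$), and define $\mu(\si)$ to add or remove the other one of the pair $(m,\al),(m,\be)$. The subtlety is that $(\hat0,\be) \notin Q$, so the element $x = \hat0$ can never participate in such a toggle; and $(\hat1,\be)\notin Q$, so $x=\hat1$ can only contribute its $\al$-copy. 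One must check that the remaining unmatched chains are exactly the chains contained entirely in the ``$\al$-part'' $\{(x,\al) \mid \hat0 < x\}$ of $Q$ that cannot be extended — but in fact one shows there are \emph{no} unmatched faces by handling these via a secondary matching that cones off with $(\hat1,\al)$: any chain living only among $\al$-coordinates either contains $(\hat1,\al)$ or can have it adjoined, since $(\hat1,\al)$ is the top of that part.

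So the matching is built in two layers: first, the toggle matching on pairs $(x,\al)\leftrightarrow(x,\be)$ for $x$ ranging over $\ol P\setminus\{\hat0\}$ (processed by decreasing $x$, or by any fixed linear extension, to read off the ``critical'' coordinate); second, on the faces left uncovered — which are precisely the chains using only second coordinate $\al$ together with possibly some tail behavior at $\hat1$ — match by adding/removing $(\hat1,\al)$. I would verify completeness by a short case check that every chain $\si$ falls into exactly one of these two regimes, and I would verify acyclicity by the usual argument: in the first layer, an alternating path can only decrease (then is forced to return to increasing at) the critical coordinate, which is monotone along the path, forbidding cycles; in the second layer the matched element $(\hat1,\al)$ is fixed, so cycles are impossible by the standard cone argument. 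The compatibility of the two layers (no face matched twice, and alternating paths cannot jump from one layer to the other) follows because the second layer only involves faces on which the first layer's toggle rule produces nothing.

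The main obstacle I anticipate is bookkeeping around the three ``missing'' or ``boundary'' elements — $(\hat0,\al)$ and $(\hat1,\be)$ absent from the proper part, and $(\hat0,\be)$ deleted — to confirm that the toggle rule is well-defined (the toggled chain stays inside $Q$ and is still a chain) and that the leftover faces are genuinely coned off by $(\hat1,\al)$ with nothing escaping. In particular one must check that removing $(\hat0,\be)$ does not strand any chain: a chain that would have been matched via the pair at $x=\hat0$ simply has no such pair, so its critical coordinate is the next-largest eligible $x$, and if none exists the chain lives purely in the $\al$-layer and is caught by the cone. Once these finitely many cases are dispatched, Lemma~\ref{le:forman} gives collapsibility immediately.
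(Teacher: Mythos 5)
Your toggle rule---pairing $(m,\al)$ with $(m,\be)$ at the ``largest'' coordinate $m$ for which exactly one of the pair lies in $\si$---does not define an involution, and your second layer does not close the resulting gaps. If exactly one of $(m,\al),(m,\be)$ is in $\si$, then after toggling either both or neither lie in $\mu(\si)$, so the defining condition for $m$ is destroyed and $\mu(\mu(\si))$ will be computed with a different critical coordinate; $\mu^2 = \id$ fails. Moreover, the cone-off layer cannot rescue the faces the first layer misses: a chain such as $\{(a,\al),(a,\be)\}$, $a \in \ol{P}$, has no coordinate where exactly one of the pair is present, yet $(\hat1,\al)$ is incomparable to $(a,\be)$---indeed $(\hat1,\al)$ is incomparable to \emph{every} element of $Q$ with second coordinate $\be$---so adding $(\hat1,\al)$ is only possible for chains lying entirely in the $\al$-layer. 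Such faces are stranded. Your own worry about the toggle leaving $\De(Q)$ is also real: taking $a<b<c$ in $\ol{P}$ and $\si=\{(a,\al),(b,\be),(c,\be)\}$, the largest eligible $m$ is $c$, but $(c,\al)$ is incomparable to $(b,\be)$, so one is forced to fall back to a smaller $m$, which wrecks well-definedness.

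The paper's matching avoids both obstructions precisely by \emph{not} pairing $(x,\al)$ with $(x,\be)$. Given a chain $C$, it appends the sentinel $(\hat1,\be)$, locates the \emph{first} element $(x_j,\be)$ of the extended chain whose second coordinate is $\be$, and sets $p(C)=(x_j,\al)$, which is then toggled. Every element of $C$ below $(x_j,\be)$ has second coordinate $\al$ and first coordinate $\leq x_j$, while every element above has second coordinate $\be$ and first coordinate $\geq x_j$; hence $C \cup \{p(C)\}$ is always a chain in $Q$. Crucially, inserting or deleting $(x_j,\al)$ does not alter which position is the first $\be$-position, so $p(C \cup \{p(C)\}) = p(C) = p(C - \{p(C)\})$ and $\mu$ really is an involution with no unmatched faces. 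Acyclicity then drops out because along an alternating path the number of elements with second coordinate $\be$ strictly decreases.
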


\begin{proof}
We shall apply Lemma~\ref{le:forman}. For brevity, let $Q=\ol{P \times \2}-\{(\hat0,\be)\}$. Given a chain
\[
C=\{(x_1,\ga_1) < (x_2,\ga_2) <\cd <(x_m,\ga_m)\} \seq Q,
\]
put $(x_{m+1},\ga_{m+1})=(\hat1,\be)$, and let $j$ be the smallest index such that $\ga_j=\be$. Define $p(C)=(x_j,\al)$.
Observe that $C \cup \{p(C)\}$ is a chain in $Q$, and that $p(C \cup \{p(C)\})=p(C)=p(C-\{p(C)\})$. Therefore,
\[
\mu(C)=\begin{cases}C \cup \{p(C)\} & \h{if $p(C) \notin C$,} \\ C-\{p(C)\} & \h{otherwise}\end{cases}
\]
defines a complete matching $\mu$ on $\De(Q)$.
Now, if $C_0 \prec \mu(C_0) \succ C_1 \prec \mu(C_1)$ for chains $C_0 \neq C_1$, then $C_1$ has fewer elements than $C_0$ with $\be$ as the second component.
Hence $\mu$ is acyclic.
\end{proof}

\begin{Lemma} \label{le:extension}
Suppose $P$ is a finite poset with $\hat0$ and $\hat1$.
If $\De(\ol{P})$ is a PL $d$-ball (a PL $d$-sphere), then $\De(\ol{P \times \2})$ is a PL $(d+1)$-ball (a PL $(d+1)$-sphere).
In either case, $\De(\ol{P \times \2}-\{(\hat0,\be)\})$ is a PL $(d+1)$-ball.
\end{Lemma}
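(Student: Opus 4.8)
The plan is to prove all three assertions simultaneously, by induction on $|P|$. Since a PL ball or sphere is in particular a PL manifold, the hypothesis already forces $P$ to be graded and $\De((x,y))$ to be a PL ball or sphere for every interval $(x,y)\neq(\hat0,\hat1)$ of $P$; it is then routine to check that $P\times\2$ and $(P\times\2)-\{(\hat0,\be)\}$ are graded and that $\dim\De(\ol{P\times\2})=d+1$.

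The main preliminary step is to show that $\De(\ol{P\times\2})$ is a PL $(d+1)$-manifold. Since every link in the order complex of the proper part of a poset with $\hat0$ and $\hat1$ is a join of order complexes of open intervals, Lemma~\ref{le:join} reduces this to showing that $\De((z,z'))$ is a PL ball or sphere for every interval $(z,z')\neq(\hat0,\hat1)$ of $P\times\2$. Using $[(x,\ga),(y,\de)]\cong[x,y]\times[\ga,\de]$, each such interval is isomorphic either to $(x,y)$ for an interval $(x,y)\neq(\hat0,\hat1)$ of $P$, or to $\ol P$, or to $\ol{[x,y]\times\2}$ for a proper subinterval $[x,y]\subsetneq P$. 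The first two possibilities are covered by the hypothesis; in the third, $\De((x,y))$ is a PL ball or sphere, so the induction hypothesis applies to the smaller poset $[x,y]$. The same analysis works for $(P\times\2)-\{(\hat0,\be)\}$, where the only new type of interval is $\ol{[\hat0,y]\times\2}-\{(\hat0,\be)\}$ with $y\in\ol P$, handled by the deletion clause of the induction hypothesis applied to $[\hat0,y]$. Thus $\De(\ol{P\times\2}-\{(\hat0,\be)\})$ is a PL $(d+1)$-manifold; being collapsible by Lemma~\ref{le:collapsible}, it is a PL $(d+1)$-ball by Lemma~\ref{le:whitehead}, which is the last assertion.

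For the first assertion, set $v=(\hat0,\be)$, a minimal element of $\ol{P\times\2}$, and use the standard decomposition $\De(\ol{P\times\2})=X_1\cup X_2$ with $X_1=\De(\ol{P\times\2}-\{v\})$ and $X_2=\st_{\De(\ol{P\times\2})}(v)$, the latter being the cone over $\lk(v)=\De(\ol P\times\{\be\})\cong\De(\ol P)$ with apex $v$. Then $X_1$ is a PL $(d+1)$-ball by the previous paragraph, $X_2$ is a PL $(d+1)$-ball with $\lk(v)\seq\pa X_2$, and $X_1\cap X_2=\lk(v)$. Now a facet of $\lk(v)$ is a maximal chain of $\ol P$, whose bottom element is an atom of $P$; it follows that such a facet lies in a unique facet of $X_1$, so $\lk(v)\seq\pa X_1$. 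If $\De(\ol P)$ is a PL $d$-ball, then $X_1\cap X_2=\lk(v)$ is a PL $d$-ball contained in $\pa X_1\cap\pa X_2$, so Lemma~\ref{le:PL}(i) gives that $\De(\ol{P\times\2})$ is a PL $(d+1)$-ball. If $\De(\ol P)$ is a PL $d$-sphere, then $\lk(v)$ is a PL $d$-sphere inside the PL $d$-sphere $\pa X_1$; since a PL $d$-sphere, being a strongly connected $d$-pseudomanifold, cannot be a proper subcomplex of another PL $d$-sphere, we get $\pa X_1=\lk(v)=\pa X_2=X_1\cap X_2$, so Lemma~\ref{le:PL}(ii) gives that $\De(\ol{P\times\2})$ is a PL $(d+1)$-sphere.

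The step I expect to cause the most trouble is the inductive interval analysis: one must verify carefully that every interval of $P\times\2$ and of $(P\times\2)-\{(\hat0,\be)\}$ is genuinely of one of the listed shapes, and that the dimensions line up (which comes from gradedness), so that the induction hypothesis in the correct dimension actually applies. The boundary identifications in the gluing step — that $\lk(v)$ lies in $\pa X_1$, and that it fills $\pa X_1$ in the spherical case — also require care.
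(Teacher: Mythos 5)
Your proof is correct and takes essentially the same route as the paper's: show that the deletion complex $\De(\ol{P\times\2}-\{(\hat0,\be)\})$ is a PL $(d+1)$-manifold by analysing links inductively, invoke Lemma~\ref{le:collapsible} plus Whitehead's theorem to conclude it is a ball, and then glue it to the cone over $\lk((\hat0,\be))\cong\De(\ol P)$ via Lemma~\ref{le:PL}. The only differences are cosmetic — you induct on $|P|$ where the paper inducts on $d$, you enumerate intervals $[(x,\ga),(y,\de)]\cong[x,y]\times[\ga,\de]$ rather than writing out $Q_{<(p,\ga)}$ and $Q_{>(p,\ga)}$ case by case, and your opening claim that $\De(\ol{P\times\2})$ is a manifold is never actually used (only the manifold structure of the deletion is needed for Whitehead, and the topology of $\De(\ol{P\times\2})$ is obtained from the gluing).
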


\begin{proof}
Let $R=\ol{P \times \2}$ and $Q=R-\{(\hat0,\be)\}$. We induct on $d$, all assertions being clear when $d=0$.

For $p \in \ol{P}$, we have the following two poset isomorphisms:
\[
Q_{<(p,\ga)} \cong \begin{cases}\ol{P_{\leq p}} & \h{if $\ga=\al$,} \\ \ol{P_{\leq p} \times \2}-\{(\hat0,\be)\} & \h{if $\ga=\be$,}\end{cases}
\]
and
\[
Q_{>(p,\ga)} \cong \begin{cases}\ol{P_{\geq p} \times \2} & \h{if $\ga=\al$,} \\ \ol{P_{\geq p}} & \h{if $\ga=\be$.}\end{cases}
\]
Moreover, $Q_{<(1,\al)} \cong \ol{P}$. The induction assumption therefore implies that all links of non-empty faces in $\De(Q)$ are PL balls or spheres.
Hence $\De(Q)$ is a PL $(d+1)$-manifold. Now Lemmas~\ref{le:whitehead} and \ref{le:collapsible} imply that $\De(Q)$ is a PL $(d+1)$-ball.

Next observe that
\[
\De(R)=\De(Q) \cup \De\Big(R_{\geq (\hat0,\be)}\Big).
\]
Both complexes in the union are PL $(d+1)$-balls; the latter is isomorphic to a cone over $\De(\ol{P})$. Furthermore, we have
\[
\De(Q) \cap \De\Big(R_{\geq (\hat0,\be)}\Big)=\De\Big(R_{> (\hat0,\be)}\Big),
\]
which is contained in the boundary of both balls. On the other hand, this intersection is isomorphic to $\De(\ol{P})$.
The desired conclusions about $\De(R)$ now follow from Lemma~\ref{le:PL}.
\end{proof}

We shall frequently find the need to modify simplicial complexes by replacing balls with other balls.
The following two statements describe circumstances under which the topology is left unchanged.

\begin{Lemma} \label{ball}
Suppose $\De$, $A$, and $A'$ are PL $d$-balls such that $A \seq \De$ and $A' \cap \De=\pa{A'}=\pa{A}$. Then $(\De-A) \cup A'$ is a PL $d$-ball.
\end{Lemma}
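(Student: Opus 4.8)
The plan is to decompose the sphere $S^d\cong(\De-A)\cup A'$ suitably and apply Newman's theorem together with Lemma~\ref{le:PL}(i). First I would record that $A'\cup\De$ is a PL $d$-ball: indeed $A'$ and $\De$ are PL $d$-balls whose intersection is $A'\cap\De=\pa A'$, a PL $(d-1)$-ball lying in $\pa A'$ trivially and in $\pa\De$ because $\pa A'=\pa A\seq A\seq\De$ means the $(d-1)$-faces of $\pa A$ are boundary faces of $\De$ as well (each such face lies in the unique facet of $A$ containing it, and since $A\seq\De$ and $A$ is full-dimensional, that facet is a facet of $\De$; any other facet of $\De$ containing it would force a second facet of $A$, impossible). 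So Lemma~\ref{le:PL}(i) applies and $B:=A'\cup\De$ is a PL $d$-ball. Symmetrically, $A'\cup A$ is a PL $d$-ball, and since $\pa A'=\pa A$ this union is in fact a PL $d$-sphere by Lemma~\ref{le:PL}(ii); call it $S$.

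Next I would realise $(\De-A)\cup A'$ as the closure of the complement of a PL $d$-ball inside $S$. Observe $S=A\cup A'$ and $B=\De\cup A'=(\De-A)\cup A\cup A'=(\De-A)\cup S$, but more to the point, inside the sphere $S$ we have the PL $d$-ball $A$, and the closure of $S-A$ is exactly $A'$ (since $A\cup A'=S$ and $A\cap A'=\pa A=\pa A'$); that only recovers $A'$, not what we want. Instead I would work inside $B=\De\cup A'$, which is a PL $d$-ball containing the PL $d$-ball $\De-A$... but $\De-A$ need not be a ball. So the cleaner route: embed $A'$ as a PL $d$-ball in the PL $d$-sphere obtained by coning, or rather, apply Newman directly to $A\seq S$ to learn $\overline{S-A}=A'$ is a ball with $\pa A'=\pa A$ — which we already know — and separately apply Newman to $A'\seq S$ to learn $\overline{S-A'}=A$.

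The real argument: form the PL $d$-sphere $\Sigma$ by gluing $B=\De\cup A'$ to a cone $c*\pa B$ along $\pa B$; this is a PL $d$-sphere by Lemma~\ref{le:PL}(ii) since a cone over the PL $(d-1)$-sphere $\pa B$ is a PL $d$-ball with the same boundary. Inside $\Sigma$, the subcomplex $A'$ is a PL $d$-ball, so by Newman's theorem its complementary ball $\overline{\Sigma-A'}$ is a PL $d$-ball; but $\overline{\Sigma-A'}=(B-A')\cup(c*\pa B)=(\De-A)\cup(c*\pa B)$ (using $B-A'=\De-A$, as $A'\cap\De=\pa A\seq A$ so removing the open faces of $A'$ from $B$ removes exactly the open faces of $A$ from $\De$). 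Thus $(\De-A)\cup(c*\pa B)$ is a PL $d$-ball with boundary $\pa A'=\pa A$. Finally, $(\De-A)\cup A'$ is obtained from this ball by excising the ball $c*\pa B$ and regluing the ball $A'$ along the common $(d-1)$-sphere $\pa A=\pa A'$: by Lemma~\ref{le:PL}(i), since $(\De-A)\cup(c*\pa B)$ minus the open cone is $(\De-A)$-plus-boundary and meets $A'$ in $\pa A'$, which lies in both boundaries, the union $(\De-A)\cup A'$ is a PL $d$-ball.

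\textbf{Main obstacle.} The delicate point is the bookkeeping identity $B-A'=\De-A$ and, relatedly, that $\pa A'=\pa A$ sits in $\pa\De$; one must argue carefully that removing the open simplices of $A'$ from $\De\cup A'$ returns precisely $\De-A$, using that $A'$ meets $\De$ only in the shared boundary and that $A$ is full-dimensional in $\De$ so its interior faces are not boundary faces of $\De$. Everything else is a routine application of Lemmas~\ref{le:PL} and \ref{le:newman}.
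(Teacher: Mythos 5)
Your argument breaks at the very first step. You assert that $\pa A'=\pa A\seq\pa\De$, with the justification that a $(d-1)$-face $\si\in\pa A$ lies in a unique facet of $A$, and that ``any other facet of $\De$ containing it would force a second facet of $A$, impossible.'' That last clause is a non sequitur: a facet of $\De$ containing $\si$ need not belong to $A$ at all, so there is nothing to force. In fact the inclusion $\pa A\seq\pa\De$ is simply false under the stated hypotheses. For a minimal counterexample take $d=1$: let $\De$ be the path on vertices $1,2,3$ with edges $\{1,2\},\{2,3\}$, let $A$ be the edge $\{1,2\}$, and let $A'$ be the path $1$--$4$--$2$ on a new vertex $4$. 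Then $A\seq\De$, $A'\cap\De=\pa A'=\pa A=\{\0,\{1\},\{2\}\}$, all hypotheses hold, yet $\{2\}\in\pa A$ while $\pa\De=\{\0,\{1\},\{3\}\}$. In this example $A'\cup\De$ is a triangle $1$--$2$--$4$ with a pendant edge $2$--$3$, which is not even a $1$-manifold, so the complex $B=A'\cup\De$ that your whole construction is built around is not a PL $d$-ball, Lemma~\ref{le:PL}(i) does not apply, and there is no $(d-1)$-sphere $\pa B$ to cone over. The ``bookkeeping identity'' $B-A'=\De-A$ that you flag as the delicate point is also incorrect: $B-A'=\De-(A'\cap\De)=\De-\pa A'$, which strictly contains $\De-A$ whenever $A$ is full-dimensional, since $A$ has interior faces outside $\pa A'$.

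The paper sidesteps all of this by coning over $\pa\De$ rather than over the boundary of any auxiliary ball: take a cone $C$ on $\pa\De$ with apex $v$ disjoint from $A'$ and $\De$, so $S=\De\cup C$ is a PL $d$-sphere by Lemma~\ref{le:PL}(ii). Newman's theorem (Lemma~\ref{le:newman}) applied to $A\seq S$ gives that $a=\cl(S-A)$ is a PL $d$-ball with $\pa a=\pa A=\pa A'$, and one checks $a\cap A'=\pa A'$ directly from $A'\cap\De=\pa A'$ and $v\notin A'$. Then $\Si=a\cup A'$ is a PL $d$-sphere by Lemma~\ref{le:PL}(ii), and $\del_\Si(\{v\})=(\De-A)\cup A'$ is a PL $d$-ball. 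The crucial difference from your attempt is that $\pa\De$ genuinely is the boundary of the ball $\De$ you are coning, whereas in your construction neither the ball nor the sphere you wanted to cone over actually exists. No relation between $\pa A$ and $\pa\De$ is ever needed.
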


\begin{proof}
Let $C$ be a cone over $\pa\De$ whose apex $v$ is disjoint from $A'$ and $\De$. By Lemma~\ref{le:PL}(ii), $S=\De \cup C$ is a PL $d$-sphere.
Put $a=\cl(S-A)$, which is a PL $d$-ball with $\pa{a}=\pa{A}$ by Lemma~\ref{le:newman}.
Since $A' \cap \De=\pa{A'}=\pa{A}$, $\Si=a \cup A'$ is a PL $d$-sphere by Lemma~\ref{le:PL}(ii). Hence,
\[
\del_\Si(\{v\})=(\De-A) \cup A'
\]
is a PL $d$-ball.
\end{proof}

\begin{Lemma} \label{homeo}
Let $\De$ be a simplicial complex. Suppose $A$ and $A'$ are PL $d$-balls and $F$ is a PL $(d-1)$-ball such that $A \seq \De$ and $F \seq \pa{A} \cap \pa{A'}$.
If both $\cl(\De-A) \cap A$ and $\cl(\De-A) \cap A'$ are contained in $F$, then $\De$ and $(\De-A) \cup A'$ are PL homeomorphic.
\end{Lemma}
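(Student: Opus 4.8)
The plan is to reduce everything to Lemma~\ref{Hudson}, which says that a PL homeomorphism between boundary sub-balls extends over the ambient balls. The key observation is that $\De$ decomposes as $\De = \cl(\De - A) \cup A$, with the two pieces meeting in $\cl(\De-A)\cap A \seq F \seq \pa A$; likewise $(\De-A)\cup A'$ (more precisely its closure as a subcomplex, which is $\cl(\De-A)\cup A'$) decomposes as $\cl(\De-A)\cup A'$, with the pieces meeting in $\cl(\De-A)\cap A' \seq F \seq \pa{A'}$. So both spaces are built by gluing the common complex $\cl(\De-A)$ to a $d$-ball ($A$ or $A'$) along a common sub-ball $F$ of the boundary of that $d$-ball. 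If I can produce a PL homeomorphism $\|A\| \to \|A'\|$ that restricts to the identity on $\|F\|$ and, moreover, carries the actual gluing region $G := \cl(\De-A)\cap A$ onto $\cl(\De-A)\cap A'$ compatibly, then gluing it with the identity on $\|\cl(\De-A)\|$ gives the desired PL homeomorphism of the two unions.

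The first step is to get a PL homeomorphism $h\colon \|A\|\to\|A'\|$ fixing $\|F\|$ pointwise. Apply Lemma~\ref{Hudson} to $A$ with the sub-ball $F\seq\pa A$ and a fixed PL homeomorphism $\f\colon\|F\|\to\|\De^{d-1}\|$: this extends to $\Phi\colon\|A\|\to\|\De^d\|$. Do the same for $A'$ with the \emph{same} $\f$, getting $\Phi'\colon\|A'\|\to\|\De^d\|$. Then $h=(\Phi')^{-1}\circ\Phi$ is a PL homeomorphism $\|A\|\to\|A'\|$ which is the identity on $\|F\|$.

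The second, more delicate step is to arrange that $h$ sends $G=\cl(\De-A)\cap A$ onto $G'=\cl(\De-A)\cap A'$. This is where the hypothesis $G, G'\seq F$ is essential: both $G$ and $G'$ are subcomplexes of $F$, and $h|_{\|F\|}=\id$, so automatically $h(\|G\|)=\|G\|$. But we actually need $h(\|G\|)=\|G'\|$, so the clean approach is instead: don't try to match $G$ and $G'$ directly; rather, glue along $F$ itself. Set $g\colon \|\De\| = \|\cl(\De-A)\| \cup_{\|G\|} \|A\| \to \|\cl(\De-A)\| \cup_{\|G'\|} \|A'\|$ by $g = \id$ on $\|\cl(\De-A)\|$ and $g = h$ on $\|A\|$. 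For $g$ to be well-defined we need the two definitions to agree on the overlap $\|\cl(\De-A)\|\cap\|A\| = \|G\|$: there $g=\id$ from the left piece, and $g=h=\id$ (since $\|G\|\seq\|F\|$ and $h$ fixes $\|F\|$) from the right piece, so they agree, and $g$ maps $\|G\|$ identically into $\|F\|\seq\|A'\|$, landing inside $\|\cl(\De-A)\|\cap\|A'\| = \|G'\|$ — wait, this requires $\|G\|\seq\|G'\|$. Since both lie in $\|F\|$ but need not coincide, the honest fix is to first replace $A$ by $h^{-1}$ of a small collar adjustment; concretely, note $G\seq F$ and $G'\seq F$ are both sub-balls (or sub-complexes) of the $(d-1)$-ball $F$, and $\cl(\De-A)$ glues to $A$ only along $G\seq F\seq\pa A$, so we may \emph{enlarge} the identification: view $\De$ up to PL homeomorphism as $\cl(\De-A)\cup_F A$ by pushing $A$ out along a collar of $F$ in $A$ — this does not change the PL homeomorphism type of $\De$ since $F$ is a PL ball in $\pa A$ and one can engulf $G$ into a collar. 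Then the same enlargement works for $A'$ and $G'$, and now $g=\id\sqcup h$ glues correctly along all of $\|F\|$.

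The main obstacle is precisely this bookkeeping: making rigorous the passage ``$\De$ is PL homeomorphic to $\cl(\De-A)$ glued to $A$ along all of $F$, not merely along $G$.'' This is a standard collar/engulfing argument (the region between $G$ and $F$ inside $\pa A$ can be absorbed using a collar of $\pa A$ in $A$, leaving $\|A\|$ and $\|\De\|$ unchanged up to PL homeomorphism), but it must be stated carefully; everything else — Lemma~\ref{Hudson} twice, then gluing two PL homeomorphisms that agree on a common subcomplex — is routine.
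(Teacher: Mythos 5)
Your approach is the same as the paper's: apply Lemma~\ref{Hudson} twice with a common homeomorphism $\f\colon\|F\|\to\|\De^{d-1}\|$ to produce $h=(\Phi')^{-1}\circ\Phi\colon\|A\|\to\|A'\|$ fixing $\|F\|$ pointwise, then glue $h$ with the identity on $\|\cl(\De-A)\|$.

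However, the ``delicate second step'' you agonise over is a phantom difficulty, and the collar/engulfing repair you sketch is both unnecessary and, as stated, not really meaningful (one cannot ``enlarge the identification'' from $G$ to $F$ without changing the underlying space when $G\subsetneq F$). Here is why there is nothing to fix. Write $G=\cl(\De-A)\cap A$ and $G'=\cl(\De-A)\cap A'$. By hypothesis $G\seq F$ and $F\seq\pa A'\seq A'$, while trivially $G\seq\cl(\De-A)$; hence $G\seq\cl(\De-A)\cap A'=G'$. Symmetrically $G'\seq F\seq\pa A\seq A$ and $G'\seq\cl(\De-A)$, so $G'\seq G$. Thus $G=G'$, and the point where you wrote ``wait, this requires $\|G\|\seq\|G'\|$'' is automatically satisfied. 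More to the point, you do not even need $G=G'$: the glued map $g=\id\cup h$ is well-defined because its two branches agree on the overlap $\|G\|\seq\|F\|$ (where $h=\id$), it is surjective because $h(\|A\|)=\|A'\|$ and $\id(\|\cl(\De-A)\|)=\|\cl(\De-A)\|$, and its candidate inverse $\psi^{-1}\cup\id$ is well-defined for the symmetric reason $G'\seq F$. This is exactly the paper's argument, which invokes $\cl(\De-A)\cap A\seq F$ for $f$ and $\cl(\De-A)\cap A'\seq F$ for $f^{-1}$, and nothing more. So: drop the last two paragraphs, keep the first two, and add the one-line check that the branches of $g$ (and of $g^{-1}$) agree on their overlaps because those overlaps lie in $\|F\|$.
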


\begin{proof}
There is a PL homeomorphism $\f: \|F\| \to \|\De^{d-1}\|$.
By Lemma~\ref{Hudson}, it extends to PL homeomorphisms $\f_1: \|A\| \to \|\De^d\|$ and $\f_2: \|A'\| \to \|\De^d\|$. Let $\psi=\f_2^{-1} \circ \f_1$.
Then $\psi: \|A\| \to \|A'\|$ is a PL homeomorphism whose restriction to $\|F\|$ is the identity map. Obviously, $\De=\cl(\De-A)\cup A$.
Moreover, $(\De-A) \cup A'=\cl(\De-A) \cup A'$ because $\cl(\De-A) \seq (\De-A) \cup F$. Now define $f: \|\De\| \to \|(\De-A) \cup A'\|$ by
\[
f(x)=\begin{cases}\psi(x) & \h{if $x \in \|A\|$,} \\ x & \h{if $x \in \|\cl(\De-A)\|$.}\end{cases}
\]
Then $f$ is a well-defined PL map because $\cl(\De-A)\cap A \seq F$, and the same holds for $f^{-1}$ since $\cl(\De-A)\cap A' \seq F$.
\end{proof}

\section{Zippings and removals} \label{se:zip}

In \cite{Reading}, Reading introduced the concept of a zipper in a poset. We restrict his definition somewhat.

\begin{Defn}[Reading~\cite{Reading}]
Let $P$ be a finite poset with $\hat0$ and $\hat1$, and distinct elements $x,y,z \in P$. Call $(x,y,z)$ a \emph{zipper} if
\begin{itemize}
  \item[(i)]   $z$ covers only $x$ and $y$,
  \item[(ii)]  $z=x \vee y$, where $\vee$ denotes join (supremum), and
  \item[(iii)] $[\hat0,x)=[\hat0,y)$.
\end{itemize}
The zipper is \emph{proper} if $z \neq \hat1$.
\end{Defn}

\begin{Defn}[Reading~\cite{Reading}]
Given $P$ with a partial order $\leq$ and a proper zipper $(x,y,z)$, let $P'=(P-\{x,y,z\}) \biguplus \{\xy\}$, and define a partial order $\leq'$ on $P'$ by
\begin{itemize}
  \item $a \leq' b$ if $a \leq b$,
  \item $\xy \leq' a$ if $x \leq a$ or $y \leq a$,
  \item $a \leq' \xy$ if $a \leq x$ (or, equivalently, $a \leq y$), and
  \item $\xy \leq' \xy$.
\end{itemize}
\end{Defn}

The fact that $\leq'$ is a partial order on $P'$ is \cite[\Pro~4.1]{Reading}. We say that $P'$ is the result of a \emph{zipping} in $P$.
The effect is that $P'$ is obtained from $P$ by identifying the elements $x$, $y$, and $z$; they become the element $\xy$.
Reading proved that this preserves PL spheres:

\begin{Thm}[\mbox{\cite[\Th~4.7]{Reading}}] \label{4.7}
If $P'$ is obtained from $P$ by zipping a proper zipper and $\De(\ol{P})$ is a PL $d$-sphere, then so is $\De(\ol{P'})$.
\end{Thm}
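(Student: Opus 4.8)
The plan is to induct on $|P|$ and to exhibit both $\De(\ol P)$ and $\De(\ol{P'})$ as unions of two PL $d$-balls glued along their common boundary, with the \emph{same} first ball in each case; Lemma~\ref{le:PL}(ii) then yields that $\De(\ol{P'})$ is a PL $d$-sphere. First record the routine structural facts: $\De(\ol P)$ is a PL $d$-manifold, so $P$ is graded, of rank $d+2$; the zipper axioms force $x,y\neq\hat0$, and properness gives $z\neq\hat1$, so $x'\in\ol{P'}$; $\rk(x)=\rk(y)$ by axiom~(iii) and $\rk(z)=\rk(x)+1$ by axiom~(i), and $P'$ inherits a rank function (with $\rk(x')=\rk(x)$), so $P'$ too is graded. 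Also, the order complex of every proper interval $(a,b)\neq(\hat0,\hat1)$ of $P$ is a PL sphere, since $\De((a,b))$ is itself a link in the PL $d$-sphere $\De(\ol P)$ (concatenate saturated chains of $[\hat0,a]$ and $[b,\hat1]$ and delete $\hat0,\hat1$).

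The first real step is to prove $\De(\ol{P'})$ is a PL $d$-manifold without boundary. A case analysis on a proper interval $(a,b)\neq(\hat0,\hat1)$ of $P'$ is required. If $x'$ is not genuinely involved, $[a,b]_{P'}$ is isomorphic to a proper interval of $P$. If $x'$ lies strictly inside and $z\in[a,b]_P$, then $(x,y,z)$ restricts to a proper zipper of the poset $[a,b]_P$, whose zipping is $[a,b]_{P'}$; since $|[a,b]_P|<|P|$ the inductive hypothesis applies. Finally, if $a=x'$ and $z<b$, then --- since $z$ covers $x$, we have $(x,z)_P=\emptyset$, so the link of $z$ in $\De((x,b)_P)$ is $\De((z,b)_P)$, and likewise with $y$ --- the complex $\De((x',b)_{P'})$ equals $\del_{\De((x,b)_P)}(\{z\})\cup\del_{\De((y,b)_P)}(\{z\})$, a union of two PL balls (vertex deletions in PL spheres; Lemma~\ref{le:newman}) meeting exactly along $\De((z,b)_P)$, hence a PL sphere by Lemma~\ref{le:PL}(ii). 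So $\De((a,b)_{P'})$ is always a PL sphere; all links of non-empty faces of $\De(\ol{P'})$ are joins of such, hence PL spheres; and $\De(\ol{P'})$ is a PL $d$-manifold without boundary. In particular $\lk_{\De(\ol{P'})}(x')=\De((\hat0,x)_P)\join\De((x',\hat1)_{P'})$ is a PL $(d-1)$-sphere by Lemma~\ref{le:join}, so $B':=\overline{\st}_{\De(\ol{P'})}(x')=\cone_{x'}\bigl(\lk_{\De(\ol{P'})}(x')\bigr)$ is a PL $d$-ball.

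Now to the surgery. Let $L$ be the subcomplex of $\De(\ol P)$ generated by the two edges $\{x,z\}$ and $\{y,z\}$. Because $z$ covers only $x$ and $y$, and because $x$ and $y$ are incomparable (so $\{x,y\}$ is not a face), $L$ is a \emph{full} subcomplex of $\De(\ol P)$, and it is obviously collapsible. Hence a regular neighbourhood of $L$ in the PL $d$-sphere $\De(\ol P)$ is a PL $d$-ball; identifying such a neighbourhood with the simplicial neighbourhood $U:=\overline{\st}(x)\cup\overline{\st}(y)\cup\overline{\st}(z)$ (passing to a subdivision if necessary, which is immaterial for PL type) shows $U$ is a PL $d$-ball, and then $W:=\cl(\De(\ol P)-U)$ is a PL $d$-ball with $\partial W=\partial U=U\cap W$ by Lemma~\ref{le:newman}. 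A chain of $\ol P$ lies outside $U$ precisely when it avoids $x,y,z$ and cannot be extended by any of them; using $[\hat0,x)=[\hat0,y)$ and $z=x\vee y$, one checks this is equivalent to the corresponding chain of $\ol{P'}$ avoiding $x'$ and not extendable by $x'$. Therefore $W=\cl(\De(\ol{P'})-B')$ as simplicial complexes and $\De(\ol{P'})=W\cup B'$. A further chain computation gives $U\cap W\seq\del_{\De(\ol P)}(\{x,y,z\})\cap U=\lk_{\De(\ol{P'})}(x')\seq U$; hence $\partial U=U\cap W$ is a PL $(d-1)$-sphere contained in the PL $(d-1)$-sphere $\lk_{\De(\ol{P'})}(x')$, so the two coincide. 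We conclude $\partial W=\partial B'=\lk_{\De(\ol{P'})}(x')=W\cap B'$, and Lemma~\ref{le:PL}(ii) gives that $\De(\ol{P'})$ is a PL $d$-sphere.

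I expect the main obstacle to be the assertion in the third paragraph that $U$, the union of the closed stars of $x$, $y$, and $z$, is a PL $d$-ball sitting cleanly inside $\De(\ol P)$ with $W$ its complementary ball: the simplicial neighbourhood $\overline{\st}_{\De(\ol P)}(L)$ is a genuine regular neighbourhood only after a subdivision, so some care is needed to reconcile this PL-topological fact with the chain-level descriptions of $W$ and of the common boundary $\lk_{\De(\ol{P'})}(x')$ used in the gluing. (Alternatively, one can assemble $U$ by gluing $\overline{\st}(x)$, $\overline{\st}(y)$, $\overline{\st}(z)$ one at a time via Lemma~\ref{le:PL} --- their pairwise intersections being the closed stars of $\{x,z\}$ and $\{y,z\}$ and the $(d-1)$-ball $\cone_z\bigl(\De((\hat0,x)_P)\join\De((z,\hat1)_P)\bigr)$ --- but then the boundaries must be tracked carefully at each step.) The remaining points --- gradedness of $P'$ and the interval case analysis --- are routine, if a little lengthy.
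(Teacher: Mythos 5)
Your overall decomposition --- write $\De(\ol P)=U\cup W$ with $W=\cl(\De(\ol P)-U)$ a PL $d$-ball by Newman, identify $W=\cl(\De(\ol{P'})-B')$ at the chain level, and glue $W$ with the cone $B'$ over $\lk_{\De(\ol{P'})}(x')$ via Lemma~\ref{le:PL}(ii) --- is correct and is essentially Reading's strategy, which the present paper follows in its proof of \Th~\ref{MT1}. The chain-level identifications and the direct computation that $\lk_{\De(\ol{P'})}(x')=\De(\hat0,x)\join\De((x',\hat1)_{P'})$ is a PL $(d-1)$-sphere all check out; the full ``PL manifold'' induction for $\De(\ol{P'})$ is more than you need, since only the link of $x'$ is actually used.

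The genuine gap is exactly where you suspect: your argument that $U$ is a PL $d$-ball. The simplicial neighbourhood $N(L,\De)=\overline{\st}(x)\cup\overline{\st}(y)\cup\overline{\st}(z)$ of a full subcomplex $L$ is a regular neighbourhood of $\|L\|$ only after passing to a derived subdivision; in general $N(L,\De)$ can fail to be a ball even when $L$ is an edge (e.g., an edge of the boundary of a tetrahedron has $N(L,\De)$ equal to the whole $2$-sphere). Your alternative --- glue $\overline{\st}(x)$, $\overline{\st}(y)$, $\overline{\st}(z)$ one at a time by Lemma~\ref{le:PL}(i) --- also does not go through as stated, because $\overline{\st}(x)\cap\overline{\st}(z)=\overline{\st}(\{x,z\})$ and $\overline{\st}(y)\cap\overline{\st}(z)=\overline{\st}(\{y,z\})$ are $d$-dimensional, not $(d-1)$-dimensional, so the hypotheses of Lemma~\ref{le:PL}(i) are violated. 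The fix is the observation that $\overline{\st}(z)$ is redundant: if $C\cup\{z\}$ is a chain in $\ol P$ with $x,y\notin C$, then, since $z$ covers only $x$ and $y$ and $[\hat0,x)=[\hat0,y)$, every element of $C$ below $z$ lies below both $x$ and $y$, so $C\cup\{x\}$ and $C\cup\{y\}$ are chains. Hence $U=\overline{\st}(x)\cup\overline{\st}(y)$, a union of two cones (hence PL $d$-balls) whose intersection is the PL $(d-1)$-ball $\De(\hat0,x)\join\De[z,\hat1)$ sitting in both boundaries, and Lemma~\ref{le:PL}(i) applies. This two-star decomposition is precisely Reading's argument, as recalled in the footnote to the proof of \Th~\ref{MT1}. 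With that replacement your proof is complete.
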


We shall prove a similar result for PL balls. In contrast to spheres, balls have boundaries.
This causes complications that can be overcome by imposing additional restrictions on zippers.
A version which suffices for our needs is the content of the next definition.

Recall that a \emph{coatom} in a poset with $\hat1$ is an element covered by $\hat1$.

\begin{Defn} \label{de:clean}
A zipper $(x,y,z)$ is \emph{clean} if it is proper, and for some coatom $c$ there exists a poset isomorphism $\f: [x,\hat1] \to [x,c] \times \2$ such that $\f(z)=(x,\be)$.
\end{Defn}

\begin{Thm} \label{MT1}
If $P'$ is obtained from $P$ by zipping a clean zipper and $\De(\ol{P})$ is a PL $d$-ball, then so is $\De(\ol{P'})$.
\end{Thm}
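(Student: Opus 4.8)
The plan is to exhibit the zipping as a ``replace a ball by a ball'' move and then invoke Lemma~\ref{homeo} (or Lemma~\ref{ball}). Since $\De(\ol{P})$ is a PL $d$-ball it is a PL $d$-manifold, so $P$ is graded and $\De((a,b))$ is a PL ball or sphere for every proper open interval $(a,b)\neq(\hat0,\hat1)$ of $P$; together with the product description of $[x,\hat1]$ supplied by cleanness, this is the information I would use.

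Writing $\ol{\st}$ for closed stars, the first step is the combinatorial identity $\De(\ol{P'})-\ol{\st}(\xy) = \De(\ol{P})-\big(\ol{\st}(x)\cup\ol{\st}(y)\cup\ol{\st}(z)\big)$: a chain of $\ol{P'}$ missing $\xy$ is a chain of $\ol{P}$ missing $\{x,y,z\}$, and such a chain lies outside $\ol{\st}(\xy)$ exactly when it has a vertex incomparable to $\xy$, which — using $z=x\vee y$, that $z$ covers only $x$ and $y$, and $[\hat0,x)=[\hat0,y)$ — is the same as having a vertex incomparable to each of $x$, $y$, $z$. Hence, with $A=\ol{\st}(x)\cup\ol{\st}(y)\cup\ol{\st}(z)$ and $A'=\ol{\st}(\xy)$, we have $(\De(\ol{P})-A)\cup A' = \De(\ol{P'})$, and the theorem reduces to checking that $A$ and $A'$ are PL $d$-balls, that there is a PL $(d-1)$-ball $F\subseteq\partial A\cap\partial A'$, and that $\cl(\De(\ol{P})-A)$ meets both $A$ and $A'$ only inside $F$ — the hypotheses of Lemma~\ref{homeo}.

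I would verify these by writing everything in sight as a join of order complexes of subposets. Each $\ol{\st}(v)=\De((\hat0,v])\join\De((v,\hat1))$ is the join of a cone (hence a PL ball) with a PL ball or sphere, so it is a PL $d$-ball by Lemma~\ref{le:join}; the pairwise intersections of $\ol{\st}(x),\ol{\st}(y),\ol{\st}(z)$ are again joins of order complexes of half-open subintervals of $P$, and Lemma~\ref{le:PL} assembles the three stars into the PL $d$-ball $A$. For $A'=\De((\hat0,\xy])\join\De((\xy,\hat1))$, cleanness is essential: since $[x,\hat1]\cong[x,c]\times\2$ with $\f(z)=(x,\be)$ and $c$ a coatom, $x$ is the bottom of $[x,c]\times\2$ and $z$ its ``$(\hat0,\be)$''; as $(x,c)\neq(\hat0,\hat1)$ has order complex a PL ball or sphere, Lemma~\ref{le:extension} (applied with $[x,c]$ in the role of $P$) gives that $\De((x,\hat1))$ is a PL ball or sphere and, crucially, that $\De\big((x,\hat1)-\{z\}\big)$ is \emph{always} a PL ball. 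This last fact — which has no analogue for a merely proper zipper — is what forces $A'$ to be a PL $d$-ball and, correctly exploited, lets one produce a common $(d-1)$-ball $F\subseteq\partial A\cap\partial A'$ (roughly, the complex of chains disjoint from $\{x,y,z\}$ that survive the identification unchanged) through which the common part $\cl(\De(\ol{P})-A)$ is attached to both $A$ and $A'$.

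The step I expect to be the main obstacle is this boundary bookkeeping, especially around $A'$. The interval $(\xy,\hat1)$ in $P'$ is not an interval of $P$ — it is $(x,\hat1)$ and $(y,\hat1)$ glued along $[z,\hat1)$ with $z$ removed — so its order complex must be reassembled from the two halves, the $(x,\hat1)$ half being controlled via Lemma~\ref{le:extension} and the other half and the gluing requiring separate care; moreover the whole argument splits into cases according to which of $x$, $y$, $z$ lie in the interior versus the boundary of $\De(\ol{P})$, and in each case one must confirm that the intersections fed to Lemma~\ref{le:PL} are $(d-1)$-balls contained in the correct boundaries and that a single $F$ serves for both $A$ and $A'$. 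Isolating this case analysis, and seeing precisely why the coatom $c$ and the $\times\2$ structure at the top of $P$ force $A$ to meet $\partial\De(\ol{P})$ in the required way, is the crux.
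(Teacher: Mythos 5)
Your setup is exactly the paper's: with $A$ the subcomplex whose facets are the maximal chains of $\ol{P}$ through $x$ or $y$ (your $\ol{\st}(x)\cup\ol{\st}(y)\cup\ol{\st}(z)$), $A'$ the analogous complex for $\xy$ in $\ol{P'}$, and $F=\del_A(\{x,y,z\})=\del_{A'}(\{\xy\})$, one indeed has $\De(\ol{P'})=(\De(\ol{P})-A)\cup A'$, and the task is to run this exchange through Lemma~\ref{ball} or Lemma~\ref{homeo}. You also correctly spot the one place cleanness enters: the isomorphism $[x,\hat1]\cong[x,c]\times\2$ with $z\mapsto(x,\be)$ feeds Lemma~\ref{le:extension} and makes $\De\bigl((x,\hat1)-\{z\}\bigr)$ a PL ball unconditionally, which has no counterpart for a merely proper zipper. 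So the plan is sound.

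But the proof stops precisely where the work begins, and I want to flag that the ``boundary bookkeeping'' you defer is not a routine verification to be polished off later; it is the theorem. Concretely, the missing piece is a Claim of the following form: $F$ is a PL $(d-1)$-sphere if both $\De(y,\hat1)$ and $\De(\hat0,x)$ are PL spheres, and a PL $(d-1)$-ball otherwise. Only with this in hand can one decide which lemma to apply: when $F$ is a sphere, $F=\pa A=\pa A'$ and Lemma~\ref{ball} gives the result; when $F$ is a ball, one checks $F\seq\pa A\cap\pa A'$ and the containment hypotheses and applies Lemma~\ref{homeo}. (Your framing of the case split in terms of which of $x,y,z$ meet $\pa\De(\ol{P})$ is not the productive one; the dichotomy that actually drives the argument is ball versus sphere for $\De(y,\hat1)$ and $\De(\hat0,x)$.) Proving the Claim is itself a second ball-exchange: one writes
\[
F \;=\; \De(\hat0,x)\join\bigl((\De_0-A_0)\cup A_0'\bigr),
\qquad
\De_0=\De(y,\hat1),\ \ A_0=\De[z,\hat1),\ \ A_0'=\De\bigl((x,\hat1)-\{z\}\bigr),
\]
and then reruns Lemmas~\ref{le:newman}, \ref{le:PL}, \ref{ball}, \ref{homeo}, and \ref{le:extension} one level down, again splitting on whether $\De_0$ is a sphere or a ball and on whether $\De(z,\hat1)$ is a sphere or a ball. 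None of this is in your write-up; you name it as ``the crux'' and leave it. Until that inner decomposition, the Claim, and the resulting two-case application of Lemma~\ref{ball}/Lemma~\ref{homeo} are actually carried out, the argument is a correct outline rather than a proof.

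One smaller point: your one-line derivation of $\De(\ol{P'})-\ol{\st}(\xy)=\De(\ol{P})-A$ asserts that a chain missing $\{x,y,z\}$ lies outside $A$ iff some \emph{single} vertex is incomparable to all of $x,y,z$. That equivalence is true, but it is not immediate from the zipper axioms — a priori, a chain could dodge $\ol{\st}(x)$, $\ol{\st}(y)$, $\ol{\st}(z)$ using three different witnesses — and it deserves an argument (one can derive a contradiction from two such witnesses using $[\hat0,x)=[\hat0,y)$ and that $z$ covers only $x,y$). The paper sidesteps this by writing the exchange as a five-step chain of set identities rather than via stars.
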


\begin{proof}
Suppose $\De(\ol{P})$ is a PL $d$-ball and $(x,y,z)$ is a clean zipper in $P$.
Let $\dx$ be the simplicial complex whose facets are the maximal chains in $\ol{P}$ containing $x$ or $y$ (note that this includes all that contain $z$), and let $\ddx$ be the simplicial complex whose facets are the maximal chains in $\ol{P'}$ containing $\xy$.
By the definition of a zipping, $\De(\ol{P}-\{x,y,z\})=\De(\ol{P'}-\{\xy\})$ and $\deldx=\delddx$. Hence,
\begin{align} \label{eq:ball_exchange}
\De(\ol{P'}) &= \De(\ol{P'}-\{\xy\}) \cup \ddx \\
             &= \De(\ol{P}-\{x,y,z\}) \cup \ddx \nonumber \\
             &= (\De(\ol{P})-\dx)\cup \deldx \cup \ddx \nonumber \\
             &= (\De(\ol{P})-\dx)\cup \delddx \cup \ddx \nonumber \\
             &= (\De(\ol{P})-\dx)\cup \ddx. \nonumber
\end{align}
That is, $\De(\ol{P'})$ is obtained from $\De(\ol{P})$ by removing $\dx$ and inserting $\ddx$.
Our goal is to apply either Lemma~\ref{ball} or Lemma~\ref{homeo} with $\De=\De(\ol{P})$, $A=\dx$, $A'=\ddx$, and (if needed) $F=\deldx=\delddx$.
The hypotheses must be verified.

Even though it originally concerns the situation when $\De(\ol{P})$ is a sphere, the appropriate part of Reading's proof of \cite[\Th~4.7]{Reading} shows that $\dx$ is a PL $d$-ball also in our situation.\footnote{One invokes Lemma~\ref{le:PL}(i) using that $\dx$ is the union of the PL $d$-balls $\De(\hat0,x] \join \dxh$ and $\De(\hat0,y] \join \dyh$ whose intersection is the PL $(d-1)$-ball $\De(\hat0,x) \join \De[z,\hat1)$ which is contained in the boundary of both.}

Next we observe that $\deldx \seq \pa\dx$.
Indeed, since $z=x \vee y$, the cleanness of $(x,y,z)$ implies that every facet $C$ in $\deldx$ contains some $w$ which covers exactly one of $x$ and $y$, say $x$.
Hence, $C$ extends uniquely to a facet in $\dx$, namely by adding $x$.

\begin{Claim}
If $\dyh$ and $\De(\hat0,x)$ are PL spheres, $\deldx$ is a PL $(d-1)$-sphere. Otherwise, $\deldx$ is a PL $(d-1)$-ball.
\end{Claim}

Let us assume this claim for now and turn to its proof later.

Suppose first that $\deldx$ is a sphere. Since it cannot be a proper subcomplex of another $(d-1)$-sphere, $\deldx=\pa\dx$.
Since $\ddx$ is a cone over the PL sphere $\delddx=\deldx$ with apex $\xy$, $\ddx$ is a PL $d$-ball and $\deldx=\pa\ddx$.
By Lemma~\ref{ball} and \eqref{eq:ball_exchange}, $\De(\ol{P'})$ is a PL $d$-ball.

Now suppose $\deldx$ is a ball. Since $\ddx$ is a cone over this ball with apex $\xy$, $\ddx$ is a PL $d$-ball with the PL $(d-1)$-ball $\delddx=\deldx$ contained in its boundary. Observe that
\[
\cl(\De(\ol{P})-\dx) \cap \dx \seq \De(\ol{P}-\{x,y,z\}) \cap \dx=\deldx
\]
and
\[
\cl(\De(\ol{P})-\dx) \cap \ddx \seq \De(\ol{P'}-\{\xy\}) \cap \ddx=\delddx.
\]
Lemma~\ref{homeo} now shows that $\De(\ol{P})$ and $(\De(\ol{P})-\dx) \cup \ddx$ are PL homeomorphic. By \eqref{eq:ball_exchange}, $\De(\ol{P'})$ is a PL $d$-ball.

\medskip

It remains to verify the claim. Define $\De=\dyh$, $A=\De[z,\hat1)$, and $A'=\De(\xh-\{z\})$. Observe that
\[
\deldx=\De(\hat0,x) \join ((\De-A)\cup A').
\]
By Lemma~\ref{le:join}, the claim follows if $\De$ and $(\De-A)\cup A'$ are PL homeomorphic. There are two cases:

\textbf{Case~1: $\De$ is a PL $k$-sphere.} In this case, $\dzh$ and $\dxh$ are spheres, the former because it is a link in $\dyh$, the latter by Lemma~\ref{le:extension} because $[x,c] \cong [z,\hat1]$ with $c$ being the coatom of \Def~\ref{de:clean}.
Hence, $A$ and $A'$ are PL $k$-balls, and $\pa{A}=\pa{A'}=\dzh=\De \cap A'$.
By Lemma~\ref{le:newman}, $\cl(\De-A)$ is a PL $k$-ball, and thus Lemma~\ref{le:PL}(ii) implies that $\cl(\De-A) \cup A'=(\De-A) \cup A'$ is a PL $k$-sphere, as desired.

\textbf{Case~2: $\De$ is a PL $k$-ball.} We shall apply Lemma~\ref{ball} or Lemma~\ref{homeo}, the latter with $F=\dzh$.
Again, there is a coatom $c$ such that $[x,c] \cong [z,\hat1]$.
By Lemma~\ref{le:extension}, $A'$ is a PL $k$-ball, as are $A$ and $\De$, whereas $F$ is either a PL $(k-1)$-ball or a PL $(k-1)$-sphere.
Since $A$ is a cone over $F$, $F \seq \pa{A}$. Consider a maximal chain $C$ in $\zh$ with minimum $w$ (let $w=\hat1$ if $\zh$ is empty).
Then $\f(w)=(v,\be)$ for some $v \leq c$ which covers $x$, where $\f: [x,\hat1] \to [x,c] \times \2$ is the poset isomorphism provided by \Def~\ref{de:clean}.
The only way to extend $C$ to a maximal chain in $\xh-\{z\}$ is to add $v$. Hence $F \seq \pa{A'}$.

If $F$ is a sphere, we have $F=\pa{A}=\pa{A'}$ since a sphere cannot be a proper subcomplex of another sphere of the same dimension.
Lemma~\ref{ball} then shows that $(\De-A)\cup A'$ is a PL $k$-ball.

If, instead, $F$ is a ball, we observe that
\[
\cl(\De-A) \cap A \seq \De(\yh-\{z\}) \cap \De[z,\hat1)=\dzh=F
\]
and
\[
\cl(\De-A) \cap A' \seq \De(\yh-\{z\}) \cap \De(\xh-\{z\})=\dzh=F.
\]
Thus, Lemma~\ref{homeo} implies that $(\De-A) \cup A'$ is a PL $k$-ball. The claim is established.
\end{proof}

In addition to zippings, we shall find the need for another way to modify posets which also preserves PL balls.

\begin{Defn} \label{de:removable}
Let $P$ be a finite poset with $\hat0$ and $\hat1$.
An element $z \neq \hat1$ is called \emph{removable} if $z$ covers exactly one element $x$, and for some coatom $c$ there exists a poset isomorphism $\f: [x,\hat1] \to [x,c] \times \2$ such that $\f(z)=(x,\be)$.
\end{Defn}

If $z \in P$ is removable, we shall refer to $P-\{z\}$ as obtained by a \emph{removal}.
Alternatively, in analogy with zippings, we may consider $P-\{z\}$ as being obtained by identifying $x$ and $z$.
Removals produce balls from PL balls or spheres:

\begin{Thm} \label{MT2}
Suppose $z \in P$ is removable. If $\De(\ol{P})$ is a PL $d$-ball or a PL $d$-sphere, then $\De(\ol{P}-\{z\})$ is a PL $d$-ball.
\end{Thm}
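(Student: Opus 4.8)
The plan is to adapt the proof of \Th~\ref{MT1}: a removal identifies only the two elements $x$ and $z$ rather than three, so one expects a streamlined version of the zipping argument. Write $P'=P-\{z\}$. Since $z$ covers $x$ we have $z\neq\hat0$, and $z\neq\hat1$ by removability, so $\ol{P}-\{z\}=\ol{P'}$. If $\De(\ol P)$ is a PL $d$-sphere, then $\De(\ol{P'})=\del_{\De(\ol P)}(\{z\})$ is the deletion of a single vertex from a PL $d$-sphere, hence a PL $d$-ball by Newman's theorem (Lemma~\ref{le:newman}); removability is not needed in this case. So assume $\De(\ol P)$ is a PL $d$-ball. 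Then it is a PL $d$-manifold, so by the discussion preceding Lemma~\ref{le:PL} the poset $P$ is graded and $\De(u,v)$ is a PL ball or sphere for every interval $(u,v)\neq(\hat0,\hat1)$. If $x=\hat0$, then $[x,\hat1]=P$ is isomorphic to $[\hat0,c]\times\2$ with $z$ corresponding to $(\hat0,\be)$; since $\De(\ol{[\hat0,c]})=\De(\hat0,c)$ is a PL ball or sphere, Lemma~\ref{le:extension} gives directly that $\De(\ol{P'})=\De(\ol{[\hat0,c]\times\2}-\{(\hat0,\be)\})$ is a PL $d$-ball. So assume in addition that $x\in\ol P$.

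In analogy with the complexes $\dx$ and $\ddx$ of \Th~\ref{MT1}, let $\Ga$ be the subcomplex of $\De(\ol P)$ whose facets are the maximal chains of $\ol P$ containing $x$, and $\Ga'$ the subcomplex of $\De(\ol{P'})$ whose facets are the maximal chains of $\ol{P'}$ containing $x$; thus $\Ga=\{x\}\join\lk_{\De(\ol P)}(\{x\})$ and $\Ga'=\{x\}\join\lk_{\De(\ol{P'})}(\{x\})$ are cones with apex $x$. The key combinatorial fact is that, because $z$ covers only $x$, every element of a chain of $\ol P$ containing $z$ is comparable with $x$, so every such chain lies in $\Ga$. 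Set $F=\del_{\Ga}(\{x,z\})$, which equals both $\del_{\Ga'}(\{x\})$ and $\De(\hat0,x)\join\De((x,\hat1)-\{z\})$. The goal is to apply Lemma~\ref{homeo} with $\De=\De(\ol P)$, $A=\Ga$, $A'=\Ga'$ and this $F$; then it remains only to check that $(\De(\ol P)-\Ga)\cup\Ga'=\De(\ol{P'})$ and to verify the hypotheses of that lemma, since its conclusion will be that $\De(\ol{P'})$ is PL homeomorphic to the PL $d$-ball $\De(\ol P)$.

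The hypotheses split as follows. First, $\Ga$ is a PL $d$-ball, being the cone over the link $\lk_{\De(\ol P)}(\{x\})$ of a vertex in the PL $d$-manifold $\De(\ol P)$. For $\Ga'$ and $F$ one uses removability in the form $[x,\hat1]-\{z\}\cong([x,c]\times\2)-\{(\hat0,\be)\}$ (with $\hat0$ denoting the bottom $x$ of $[x,c]$): since $\De(\ol{[x,c]})=\De(x,c)$ is a PL ball or sphere, Lemma~\ref{le:extension} shows that $\De((x,\hat1)-\{z\})$ is a PL ball, so by Lemma~\ref{le:join} so is $F=\De(\hat0,x)\join\De((x,\hat1)-\{z\})$, of dimension $d-1$ by a rank count, and $\Ga'=\{x\}\join F$ is a PL $d$-ball. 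Each facet of $F$ is a $(d-1)$-face contained in a unique facet of $\Ga$ (respectively of $\Ga'$) — namely the one obtained by adjoining $x$ — so $F\seq\pa\Ga\cap\pa\Ga'$. Second, the key combinatorial fact implies that a chain $\si$ of $\ol P$ with $\si\cup\{x\}\notin\De(\ol P)$ avoids both $x$ and $z$, while the property ``$\si\cup\{x\}$ is not a chain'' does not depend on whether one works in $\ol P$ or in $\ol{P'}$; this gives $\De(\ol P)-\Ga=\De(\ol{P'})-\Ga'$ as sets of faces, hence $(\De(\ol P)-\Ga)\cup\Ga'=\De(\ol{P'})$. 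It also shows that every face of $\cl(\De(\ol P)-\Ga)$ avoids $x$ and $z$, so $\cl(\De(\ol P)-\Ga)\cap\Ga$ and $\cl(\De(\ol P)-\Ga)\cap\Ga'$ are contained in $F$, as Lemma~\ref{homeo} requires.

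The main obstacle is the combinatorial bookkeeping around the closed star of $x$ — pinning down $F$, proving $F\seq\pa\Ga\cap\pa\Ga'$, and establishing the face-set identities — all of which rest on $z$ covering exactly one element. One pleasant difference from \Th~\ref{MT1} is that there one must distinguish whether $\deldx$ is a ball or a sphere, whereas here $F$ is \emph{always} a PL ball — precisely what Lemma~\ref{le:extension} supplies — so Lemma~\ref{homeo} applies directly and Lemma~\ref{ball} is not needed.
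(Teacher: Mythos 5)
Your proof is correct and takes essentially the same route as the paper: you apply Lemma~\ref{homeo} with $A=\Ga$ the closed star of $x$ in $\De(\ol P)$, $A'=\Ga'$ its analogue in $\De(\ol{P'})$ (which equals $\del_{\Ga}(\{z\})$, the paper's choice of $A'$), and $F=\De(\hat0,x)\join\De((x,\hat1)-\{z\})$; the key inputs are that $F$ is always a PL $(d-1)$-ball via Lemma~\ref{le:extension} and that chains meeting $\{x,z\}$ lie in $\Ga$. The one small organizational difference is your handling of the sphere case up front via Newman's theorem (noting removability is not needed there), where the paper instead argues that $x>\hat0$ forces $\De(\ol P)$ to be a ball because the link of $z$ is a cone with apex $x$; both dispose of the same cases and neither gives a substantively different argument.
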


\begin{proof}
Let $x$ and $c$ be as in \Def~\ref{de:removable}. Since $\De(x,c)$ is a PL ball or sphere, $\De(\xh-\{z\})$ is a PL ball by Lemma~\ref{le:extension}.
If $x=\hat0$ we are done, so suppose $x>\hat0$.
Then $\De(\ol{P})$ is a ball since the link of $\{z\}$ is a cone with apex $x$ and therefore not a sphere.
Let $\De_x$ be the simplicial complex whose facets are maximal chains in $\ol{P}$ containing $x$.
We shall apply Lemma~\ref{homeo} with $\De=\De(\ol{P})$, $A=\De_x$, $A'=\del_{\De_x}(\{z\})$, and $F=\del_{\De_x}(\{x,z\})$.
Since $\De_x$ is a cone over \smash{$\lk_{\De(\ol{P})}(\{x\})$}, $A$ is a PL $d$-ball satisfying $F \seq \pa{A}$.
Furthermore, $F=\De(\hat0,x) \join \De(\xh-\{z\})$, which is a PL $(d-1)$-ball by Lemma~\ref{le:join}, and $A'$ is a cone over $F$, hence a PL $d$-ball with $F$ in its boundary.
Finally, $\cl(\De-A) \cap A' \seq \cl(\De-A) \cap A \seq F$ because every chain which contains $z$ or $x$ is contained in $\De_x$.
By Lemma~\ref{homeo},
\[
(\De-A) \cup A'=(\De(\ol{P})-\De_x) \cup \del_{\De_x}(\{z\})=\De(\ol{P}-\{z\})
\]
is PL homeomorphic to $\De=\De(\ol{P})$.
\end{proof}

\section{Special partial matchings} \label{se:spm}

The following definition is taken from \cite{A-H}.

\begin{Defn}\label{de:spm}
Suppose $P$ is a finite poset with $\hat1$, and let $\cov$ denote its cover relation.
A \emph{special partial matching}, or \emph{SPM}, on $P$ is a function $M: P \to P$ such that
\begin{itemize}
  \item $M^2=\id$,
  \item $M(\hat1) \cov \hat1$,
  \item for all $x\in P$, we have $M(x) \cov x$, $M(x)=x$, or $x\cov M(x)$, and
  \item if $x\cov y$ and $M(x) \neq y$, then $M(x)<M(y)$.
\end{itemize}
\end{Defn}

The terminology comes from the fact that an SPM without fixed points is precisely a \emph{special matching} as defined by Brenti~\cite{Brenti_intersection}.

For special matchings, the following important lemma is essentially due to Brenti;
see \cite[Lemma~4.2]{Brenti_intersection}, which is, however, stated under a gradedness assumption.
A proof without this assumption appears in \cite{Hultman_zircon}. We provide here a different proof which is valid also for SPMs.

\begin{Lemma}[Lifting property] \label{le:lifting}
Suppose that $P$ is a finite poset with $\hat1$, and $M$ is an SPM on $P$. If $x,y \in P$ with $x<y$ and $M(y) \leq y$, then
\begin{itemize}
  \item[(i)]   $M(x) \leq y$,
  \item[(ii)]  $M(x) \leq x \To M(x)<M(y)$, and
  \item[(iii)] $M(x) \geq x \To x \leq M(y)$.
\end{itemize}
\end{Lemma}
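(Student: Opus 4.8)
The plan is to establish (i)--(iii) simultaneously by induction on $\ell$, the length of a longest chain from $x$ to $y$; this is legitimate because $P$ is finite. The hypothesis $M(y) \le y$ stays fixed throughout, and the workhorse is the fourth SPM axiom in the form ``$a \cov b$ and $M(a) \neq b$ imply $M(a) < M(b)$''. For the base case $\ell = 1$ we have $x \cov y$, and there are two possibilities: if $M(x) = y$, then $M(y) = M(M(x)) = x$ and all three statements are immediate; if $M(x) \neq y$, the fourth axiom yields $M(x) < M(y) \le y$, so that (i) is clear, (ii) holds unconditionally, and (iii) follows since $M(x) \ge x$ forces $x \le M(x) < M(y)$.

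For the inductive step, assume $\ell \ge 2$, fix a longest chain from $x$ to $y$, and let $z$ be its second element, so $x \cov z$ and $x < z < y$. Then the chain-length from $x$ to $z$ equals $1 < \ell$, and that from $z$ to $y$ is less than $\ell$, so the induction hypothesis is available for the pair $(z,y)$ (as $M(y) \le y$) and also for $(x,z)$ whenever $M(z) \le z$. I would then split according to the third SPM axiom applied to $z$. If $M(z) \le z$: the inductive version of (ii) applied to $(z,y)$ gives $M(z) < M(y)$, while the inductive (i)--(iii) applied to $(x,z)$ give $M(x) \le z < y$, the implication $M(x) \le x \To M(x) < M(z) < M(y)$, and the implication $M(x) \ge x \To x \le M(z) < M(y)$; these are exactly (i), (ii), (iii). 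If instead $z \cov M(z)$: the inductive (i) and (iii) applied to $(z,y)$ give $M(z) \le y$ and $z \le M(y)$, and since $M(z) > z > x$ a one-line involution argument rules out $M(x) = z$, so the fourth axiom gives $M(x) < M(z) \le y$; then (i) is clear, while (ii) and (iii) both drop out of the chain $M(x) \le x < z \le M(y)$ (using $x \cov z$).

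The part I expect to require the most care is the bookkeeping rather than any single deep step: one must set up the induction so that both ways of shrinking the pair $(x,y)$---to $(x,z)$ and to $(z,y)$---strictly decrease the chosen quantity, and then check at every appeal to the inductive statements that their hypotheses are met; in particular, it is precisely the case hypothesis $M(z) \le z$ that licenses invoking the lemma on $(x,z)$. No topology or subtle combinatorics is involved here---the SPM axioms are used almost verbatim---and the one thing to watch is the degenerate possibility $M(y) = y$; since the argument never branches on whether $M(y) < y$ or $M(y) = y$, this case is handled automatically.
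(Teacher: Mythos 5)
Your proof is correct, but it takes a genuinely different route from the paper's. The paper's argument is not inductive: it fixes a saturated chain $x = x_0 \cov x_1 \cov \cdots \cov x_k = y$, notes that the fourth SPM axiom gives, for each $i < k$, either $M(x_i) < M(x_{i+1})$ or $M(x_i) = x_{i+1}$, and then reads off (i) by locating the \emph{smallest} index at which the second alternative occurs (or using the full ascending chain $M(x_0) < \cdots < M(y) \leq y$ if it never occurs) and (ii) by locating the \emph{largest} such index; part (iii) is first reduced to (i) and (ii) by observing that when $M(x) \geq x$, (i) gives $M(x) \leq y$ and then (ii) applied to the pair $(M(x), y)$ yields $x < M(y)$. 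Your induction on the length of a longest chain from $x$ to $y$ is, in essence, a recursive repackaging of the same core observation (the fourth axiom along a single covering), but the decomposition is different: you peel off the bottom cover $x \cov z$, branch on whether $M(z) \leq z$ or $z \cov M(z)$, and establish (i)--(iii) simultaneously rather than deducing (iii) from the other two. Both arguments are elementary and of comparable length; the paper's one-pass scan along the saturated chain is slightly more compact, while your induction makes each step a minimal local verification and dispenses with the separate reduction of (iii).
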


\begin{proof}
It suffices to prove (i) and (ii) because together they imply (iii).

Consider a saturated chain $x=x_0 \cov x_1 \cov \cd \cov x_k=y$. By the definition of an SPM, for each $i<k$, either $M(x_i)<M(x_{i+1})$ or $M(x_i)=x_{i+1}$.

(i) We either have $M(x_0)<M(x_1)<\cd<M(y) \leq y$ or $M(x_0)<M(x_1)<\cd<M(x_i)=x_{i+1} \leq y$ for some $i<k$.

(ii) We either have $M(x_0)<M(x_1)<\cd<M(y)$ or $M(y)>M(x_{k-1})>\cd>M(x_{i+1})=x_i \geq x \geq M(x)$ for some $i<k$.
\end{proof}

Next, a fundamental construction is described.
It presents a poset with an SPM as the image of an order projection of a poset which in an appropriate sense is easier to understand.
This extends Reading's corresponding construction for Bruhat intervals \cite[Section~5]{Reading}.

Let $P$ be a finite poset with $\hat0$ and $\hat1$. Assume $M$ is an SPM on $P$, and define $\pi: [\hat0,M(\hat1)] \times \2 \to P$ by
\[
(p,\ga) \mapsto \begin{cases}M(p) & \h{if $\ga=\be$ and $p \cov M(p)$,} \\ p & \h{otherwise.}\end{cases}
\]
It is readily checked that the fibres of $\pi$ are as follows:
\begin{equation} \label{eq:fibres}
\pi^{-1}(p)=\begin{cases}\{(M(p),\be)\} & \h{if $p \not\leq M(\hat1)$,} \\
\{(p,\al)\} & \h{if $p<M(p)$,} \\
\{(p,\al),(p,\be)\} & \h{if $p=M(p)$,} \\
\{(p,\al),(M(p),\be),(p,\be)\} & \h{if $M(p)<p \leq M(\hat1)$.}\end{cases}
\end{equation}

\begin{Lemma}
The map $\pi$ is an order projection. In particular, $P$ is isomorphic to the fibre poset $\fib_\pi$.
\end{Lemma}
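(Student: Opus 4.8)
The plan is to verify directly that $\pi$ is order-preserving and satisfies the order-projection lifting condition; the isomorphism with $\fib_\pi$ is then immediate from Lemma~\ref{le:fibre}. First I would check that $\pi$ is order-preserving. The only non-trivial case is a cover $(p,\al) \cov (p,\be)$ in $[\hat0,M(\hat1)] \times \2$, where we need $\pi(p,\al)=p$ and $\pi(p,\be)$ comparable; if $p \cov M(p)$ then $\pi(p,\be)=M(p)>p$, and otherwise $\pi(p,\be)=p$, so both are fine. For a cover of the form $(p,\ga) \cov (q,\ga)$ with $p \cov q$ in $[\hat0,M(\hat1)]$, when $\ga=\al$ there is nothing to do, and when $\ga=\be$ one uses the SPM axioms together with Lemma~\ref{le:lifting} (applied inside $P$, noting $M(q)\le q$ when $q\le M(\hat1)$) to see that $\pi(p,\be)\le\pi(q,\be)$ in each of the sub-cases $p<M(p)$, $p=M(p)$, $M(p)<p$.

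Next I would establish the order-projection property: given $x' \le_P y'$, produce $x \le y$ in $[\hat0,M(\hat1)]\times\2$ with $\pi(x)=x'$, $\pi(y)=y'$. Here the fibre description \eqref{eq:fibres} is the main bookkeeping device. For most pairs one simply picks the $\al$-copies $x=(x',\al)$, $y=(y',\al)$ when both $x',y'\le M(\hat1)$; the subtle situation is when $y' \not\le M(\hat1)$, so the unique preimage of $y'$ is $(M(y'),\be)$, and one must check that some preimage of $x'$ sits below it. Since $M(y')\cov y'$ with $M(y')\le M(\hat1)$, and $x'\le y'$, the lifting property (Lemma~\ref{le:lifting}) gives $x'\le M(\hat1)$ (hence $x'$ has an $\al$-copy) and controls how $x'$ relates to $M(y')$: one gets either $(x',\al)\le (M(y'),\be)$ directly, or $(x',\be)$ (when $x'$ also lies above $M(x')$, so has a $\be$-copy) below $(M(y'),\be)$. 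A short case analysis, again organised by the four cases of \eqref{eq:fibres} applied to $x'$, closes this.

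The last sentence then follows at once: an order projection has isomorphic fibre poset by Lemma~\ref{le:fibre}, so $P \cong \fib_\pi$.

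The step I expect to be the main obstacle is the order-projection lifting in the case $y' \not\le M(\hat1)$: one must carefully exploit that $\pi^{-1}(y')$ is the single element $(M(y'),\be)$ living one rank below where a naive $\be$-copy of $y'$ would be, and then use Lemma~\ref{le:lifting} to guarantee a preimage of $x'$ lies weakly below it. Everything else is a routine traversal of the four fibre cases, but this case is where the SPM axioms and the lifting property genuinely do the work.
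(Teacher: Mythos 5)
The order-preservation half of your plan is sound and matches the paper's (check covers, split on $\gamma=\alpha$ vs.\ $\gamma=\beta$, invoke the SPM axioms and Lemma~\ref{le:lifting}). The order-projection half, however, contains a genuine error in exactly the case you flag as delicate, namely $y'\not\leq M(\hat1)$.

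You assert that lifting gives $x'\leq M(\hat1)$, so that $(x',\al)$ exists, and that some preimage of the form $(x',\al)$ or $(x',\be)$ then lies below $(M(y'),\be)$. Both claims fail when $M(x')<x'$. First, $x'\leq M(\hat1)$ need not hold: applying Lemma~\ref{le:lifting}(iii) to $x'<\hat1$ only yields $x'\leq M(\hat1)$ under the hypothesis $M(x')\geq x'$. Second, even when $x'\leq M(\hat1)$, both $(x',\al)\leq(M(y'),\be)$ and $(x',\be)\leq(M(y'),\be)$ are equivalent to $x'\leq M(y')$, which need not hold when $M(x')<x'$; the correct witness is $(M(x'),\be)$, for which one needs $M(x')\leq M(y')$, supplied by Lemma~\ref{le:lifting}(ii). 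A concrete counterexample: take $P=\Br(S_3)$ with the special matching $M=$ (right multiplication by $s$), so $M(\hat1)=st$; with $x'=s$ and $y'=ts$ one has $M(x')=e<x'$, $\pi^{-1}(y')=\{(t,\be)\}$, and neither $(s,\al)$ nor $(s,\be)$ is $\leq(t,\be)$, whereas $(e,\be)=(M(x'),\be)$ is. The paper's proof avoids this by splitting the order-projection verification on the sign of $M(p')-p'$ rather than on the four fibre shapes of $x'$, and in the case $M(p')<p'$ it uses $q'=(M(p'),\be)$. Your case analysis therefore needs to be reorganised along those lines; as written, it does not close.
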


\begin{proof}
For brevity, define $Q=[\hat0,M(\hat1)] \times \2$. First we show that $\pi:Q \to P$ is order-preserving.
Suppose $(p',\ga') \leq (p,\ga)$ in $Q$. The only non-obvious case to consider is when $\pi((p',\ga'))=M(p')$.
Then, if $\pi((p,\ga))=M(p)$, $M(p') \leq M(p)$ follows from the lifting property since $p<M(p)$ in this case.
If, instead, $\pi((p,\ga))=p$ we have $M(p)\leq p$ because $\ga=\be$. Hence, lifting yields $M(p') \leq p$, as desired. Thus $\pi$ is order-preserving.

Now assume $p' \leq p$ in $P$. We have to produce $q' \in \pi^{-1}(p')$ and $q \in \pi^{-1}(p)$ such that $q' \leq q$ in $Q$.
\begin{itemize}
\item If $p \leq M(\hat1)$, we may use $q'=(p',\al)$ and $q=(p,\al)$.
\item If $p \not\leq M(\hat1)$ and $M(p') \geq p'$, use $q'=(p',\al)$ and $q=(M(p),\be)$; lifting first implies $M(p)<p$ and then $p' \leq M(p)$.
\item Finally, if $p \not\leq M(\hat1)$ and $M(p')<p'$, we may take $q'=(M(p'),\be)$ and $q=(M(p),\be)$; here lifting first yields $M(p)<p$ and then $M(p') \leq M(p)$.
\end{itemize}
Thus $\pi$ is an order projection. By Lemma~\ref{le:fibre}, $P$ and $\fib_\pi$ are isomorphic.
\end{proof}

The previous lemma describes a poset with an SPM as a fibre poset.
Next, we show that the fibre poset can be constructed from the domain of the order projection using modifications that change the topology in a controlled manner.
This is analogous to Reading's \cite[\Th~5.5]{Reading}.

\begin{Thm} \label{th:convert}
Let $P$ be a finite poset with $\hat0$ and $\hat1$.
If $M$ is an SPM on $P$, then $P$ can be obtained from $[\hat0,M(\hat1)] \times \2$ by a sequence of clean zippings and removals.
\end{Thm}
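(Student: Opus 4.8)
The plan is to realise $P$ as the fibre poset $\fib_\pi$ of the order projection $\pi\colon Q\to P$, $Q:=[\hat0,M(\hat1)]\times\2$, constructed in the preceding lemma, and then to build $\fib_\pi$ from $Q$ by collapsing the fibres of $\pi$ one at a time, in the order of a linear extension of $P$. The collapse of a size-$3$ fibre will be realised by a clean zipping, that of a size-$2$ fibre by a removal, while size-$1$ fibres require no modification.

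Concretely, fix a linear extension $p_1\prec\cdots\prec p_N$ of $P$, so that $p_i<p_j$ implies $i<j$, and define a sequence $Q=Q_0,Q_1,\dots,Q_N$ recursively. Given $Q_{j-1}$, inspect $|\pi^{-1}(p_j)|$ using \eqref{eq:fibres}. If it is $1$, put $Q_j=Q_{j-1}$. If it is $2$ — which happens exactly when $p_j=M(p_j)\le M(\hat1)$ — let $Q_j$ be obtained from $Q_{j-1}$ by removing $(p_j,\be)$. If it is $3$ — exactly when $M(p_j)<p_j\le M(\hat1)$ — let $Q_j$ be obtained by zipping the triple $\bigl((p_j,\al),(M(p_j),\be),(p_j,\be)\bigr)$. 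The goal is to prove, by induction on $j$, the invariant that $Q_j$ admits an order projection onto $P$ whose fibre over $p_i$ is a singleton for $i\le j$ and coincides with $\pi^{-1}(p_i)$, with the order inherited from $Q$, for $i>j$. Granting this, $Q_N$ carries an order projection onto $P$ with all fibres singletons, so $Q_N\cong P$ by Lemma~\ref{le:fibre}, which is the assertion. The base case $j=0$ is immediate.

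The content of the inductive step is to check that the prescribed modification really is a clean zipping, respectively a removal, in $Q_{j-1}$, and that carrying it out restores the invariant. The ``product'' half of cleanness and of removability is essentially free: since no $p_i$ with $i<j$ lies above $p_j$, the modifications at stages $1,\dots,j-1$ neither delete an element of nor adjoin an element to the interval $[(p_j,\al),\hat1]$, which is therefore still an isomorphic copy of $[p_j,M(\hat1)]\times\2$; hence, with $x=(p_j,\al)$ and the coatom $c=(M(\hat1),\al)$ of $Q_{j-1}$, one has $[x,\hat1]\cong[x,c]\times\2$ carrying $(p_j,\be)$ to $(x,\be)$. Preservation of the order-projection property under each modification is routine, using $z=x\vee y$ and the disjunction in Reading's definition of the zipped order for zippings, and the uniqueness of the lower cover of $(p_j,\be)$ for removals. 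What genuinely needs work is the ``zipper-shaped'' data computed in $Q_{j-1}$: that $(p_j,\be)$ covers exactly $(p_j,\al)$ and $(M(p_j),\be)$ in the size-$3$ case (resp.\ exactly $(p_j,\al)$ in the size-$2$ case), that $(p_j,\be)=(p_j,\al)\vee(M(p_j),\be)$, and that $[\hat0,(p_j,\al))=[\hat0,(M(p_j),\be))$.

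The main obstacle is exactly these computations in $Q_{j-1}$, and this is where the lifting property (Lemma~\ref{le:lifting}) is indispensable. In $Q$ alone the conditions fail — $(p_j,\be)$ has the extra lower covers $(q,\be)$ for $q\cov p_j$, and $[\hat0,(p_j,\al))_Q\ne[\hat0,(M(p_j),\be))_Q$ — so one must show that the earlier stages have repaired this. For $q\cov p_j$ one never has $q<M(q)$: otherwise $\pi((q,\be))=M(q)\le\pi((p_j,\be))$ would force $M(q)=p_j$ and hence $q=p_j$, a contradiction. Thus the fibre over $q$ has size $2$ or $3$ and has already been collapsed to a single element $[q]$, and lifting locates $[q]$: in the size-$2$ case of stage $j$ one gets $[q]<(p_j,\al)$, so $[q]$ is not a lower cover of $(p_j,\be)$; in the size-$3$ case one gets $[q]<(M(p_j),\be)$, because lifting yields $M(q)<M(p_j)$ when $M(q)\le q$ and $q\le M(p_j)$ when $M(q)\ge q$, and $M(q)$, respectively $q$, lies in the collapsed fibre $[q]$. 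The same dichotomy applied to every $p'<p_j$ identifies $[\hat0,(p_j,\al))_{Q_{j-1}}$ and $[\hat0,(M(p_j),\be))_{Q_{j-1}}$ both with $\{[p']:p'<p_j\}$, giving condition (iii); the join condition is settled by the same indexing argument that controls the interval above $(p_j,\al)$. The induction then terminates at $j=N$ with $Q_N\cong P$.
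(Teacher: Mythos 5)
Your approach is essentially the one used in the paper: fix a linear extension of the fibre poset (equivalently, of $P$), collapse the fibres one at a time in that order, and verify — using the lifting property together with the preservation of the relevant order filters that the linear extension guarantees — that each collapse of a size-$3$ (resp.\ size-$2$) fibre is a clean zipping (resp.\ a removal). Your inductive invariant phrased via an order projection $Q_j\to P$ with partially collapsed fibres is just a mild repackaging of the paper's direct construction of $P_0,\dots,P_t$, and the arguments for the ``product half'' of cleanness/removability and for the identification $[\hat0,(p_j,\al))=[\hat0,(M(p_j),\be))$ via the lifting dichotomy match the paper's.

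There is, however, a small slip in the lower-cover argument. The assertion ``for $q\cov p_j$ one never has $q<M(q)$'' is false: in the size-$3$ case, $q=M(p_j)$ is a lower cover of $p_j$ with $q<M(q)=p_j$, and its fibre $\pi^{-1}(q)=\{(q,\al)\}$ has size $1$, not ``$2$ or $3$''. Relatedly, the deduction ``$M(q)=p_j$, and hence $q=p_j$'' should read ``and hence $q=M(p_j)$'', which in the size-$3$ case is not a contradiction. The repair is routine: $(M(p_j),\be)$ lies in the \emph{current} fibre $\pi^{-1}(p_j)$ and is the element $y$ of the intended zipper, so it should be a lower cover of $(p_j,\be)$; for $q\cov p_j$ with $q\ne M(p_j)$, one has $M(q)\ne p_j$, so the SPM axiom forces $M(q)<M(p_j)$, and $q<M(q)$ would place $M(q)$ strictly between $q$ and $p_j$, contradicting $q\cov p_j$. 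Hence $M(q)\le q$ for such $q$, so $(q,\be)$ does lie in an already-collapsed size-$2$ or size-$3$ fibre and your argument resumes. The paper sidesteps this pitfall by phrasing the covering claim directly in terms of the collapsed fibres $F_k$ with $k\le i$, which automatically excludes $(M(p),\be)\in F_{i+1}$ from consideration.
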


\begin{proof}
Again, let $Q=[\hat0,M(\hat1)] \times \2$. Suppose $F_1,\p,F_t$ is a linear extension of the fibre poset $\fib_\pi$ of the order projection $\pi: Q \to P$. This means that
\begin{equation} \label{eq:classes}
F_k \ni x \leq y \in F_l \To k \leq l.
\end{equation}

Consider the sequence of posets $Q=P_0,P_1,\p,P_t=\fib_\pi \cong P$, where $P_i$ is obtained from $P_{i-1}$ by identifying the elements of $F_i$.
More precisely, as sets,
\[
P_i=\Bigg(Q-\bigcup_{j=1}^i{F_j}\Bigg) \cup \{F_1,\p,F_i\},
\]
and the order on $P_i$ is given by $a \leq_{P_i}b$ if and only if (i) $a,b \in Q$ and $a \leq_Q b$, (ii) $a=F_k$, $b \in Q$, and $x \leq_Q b$ for some $x \in F_k$,
or (iii) $a=F_k$, $b=F_l$, and $x \leq_Q y$ for some $x \in F_k$ and $y\in F_l$.

Clearly, $P_i \cong P_{i+1}$ if $|F_{i+1}|=1$.
It suffices to prove that $P_{i+1}$ is obtained from $P_i$ by a removal if $|F_{i+1}|=2$ and by a clean zipping if $|F_{i+1}|=3$.

Suppose first $|F_{i+1}|=2$, so that $F_{i+1}=\{(p,\al),(p,\be)\}$ for some $p=M(p)$. We must show that $(p,\be)$ only covers $(p,\al)$ in $P_i$.
By \eqref{eq:classes}, all other elements below $(p,\be)$ in $P_i$ are of the form $F_k$, $k \leq i$.
Suppose $(p,\be)$ covers $F_k$. Then there exists $(p',\be) \in F_k$ such that $p$ covers $p'$ in $P$.
Since $M$ is an SPM, $M(p') \leq p'$, which, by \eqref{eq:fibres}, implies $(p',\al) \in F_k$. Hence $(p,\al)>_{P_i} F_k$, which is the desired contradiction.

The conditions on removable elements that are left to check involve only the structure of the order filter generated by $(p,\al)$.
By \eqref{eq:classes}, this order filter in $P_i$ is equal to the same order filter in $Q$.
In $Q$, however, the conditions are obvious (as the coatom $c$, take $(M(\hat1),\al)$).

Second, assume $|F_{i+1}|=3$ with $F_{i+1}=\{(p,\al),(M(p),\be),(p,\be)\}$; in particular, this means that $M(p)<p \leq M(\hat1)$.
We have to show that $((p,\al),(M(p),\be),(p,\be))$ is a clean zipper in $P_i$.
That $(p,\be)$ only covers $(p,\al)$ and $(M(p),\be)$ in $P_i$ is shown in the same way as when $|F_{i+1}|=2$.
Next, let us verify that $(p,\al)$ and $(M(p),\be)$ are above the same elements. By \eqref{eq:classes}, only fibres $F_k$, $k \leq i$, need to be considered.
So, suppose $F_k <_{P_i} (p,\al)$. That is, there exists $(p',\al) \in F_k$ with $p'<p$.
\begin{itemize}
  \item If $M(p')<p'$, $(M(p'),\be) \in F_k$. The lifting property asserts that $M(p')<M(p)$. Therefore, $F_k <_{P_i} (M(p),\be)$.
  \item If $M(p') \geq p'$, lifting implies $p' \leq M(p)$. Hence, $(p',\al) <_Q (M(p),\be)$ so that, again, $F_k <_{P_i} (M(p),\be)$.
\end{itemize}
Now, suppose instead $F_k <_{P_i} (M(p),\be)$.
\begin{itemize}
  \item If $(p',\al) \in F_k$ for some $p' \leq M(p)$, then $(p',\al) <_Q (p,\al)$ and $F_k <_{P_i} (p,\al)$.
  \item Otherwise, \eqref{eq:fibres} shows that $\{(M(p'),\be),(p',\al)\} \seq F_k$ holds for some $M(p') \leq p'$ and $M(p')<M(p)$.
Then the lifting property yields $p'<p$. Thus, $(p',\al) <_Q (p,\al)$ and $F_k <_{P_i} (p,\al)$.
\end{itemize}

The conditions on clean zippers that remain to be verified involve only the structure of the order filter generated by $(p,\al)$ and $(M(p),\be)$.
As before, the conditions hold in $Q$, hence in $P_i$ by \eqref{eq:classes}.
\end{proof}

\section{Main results} \label{se:main}

Combining the material of the previous two sections, we obtain strong topological statements about posets with special partial matchings.
These assertions, which are recorded in this section, form our main results.

\begin{Thm} \label{th:main}
Let $P$ be a finite poset with $\hat0$ and $\hat1$, and suppose $M$ is an SPM on $P$.
If $\De(\hat0,M(\hat1))$ is a PL $d$-ball, then $\De(\ol{P})$ is a PL $(d+1)$-ball.
If $\De(\hat0,M(\hat1))$ is a PL $d$-sphere, then $\De(\ol{P})$ is a PL $(d+1)$-ball or a PL $(d+1)$-sphere;
the latter holds if and only if $M$ is actually a special matching.
\end{Thm}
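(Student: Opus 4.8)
The plan is to combine \Th~\ref{th:convert} with the topological machinery assembled in Sections~\ref{se:PL_tools}--\ref{se:zip}. The starting point is the poset $[\hat0,M(\hat1)]$, whose proper part is precisely the open interval $(\hat0,M(\hat1))$. Applying Lemma~\ref{le:extension} to this poset, the hypothesis that $\De(\hat0,M(\hat1))$ is a PL $d$-ball (respectively a PL $d$-sphere) yields that $\De(\ol{[\hat0,M(\hat1)] \times \2})$ is a PL $(d+1)$-ball (respectively a PL $(d+1)$-sphere). This is the ``base'' whose topology we will propagate to $P$.

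Next I would invoke \Th~\ref{th:convert}: $P$ is obtained from $[\hat0,M(\hat1)] \times \2$ by a finite sequence $[\hat0,M(\hat1)] \times \2 = P_0, P_1, \dots, P_t = P$ in which each $P_{i+1}$ arises from $P_i$ by a clean zipping or a removal. I then track the topology of $\De(\ol{P_i})$ along this sequence. By \Th~\ref{MT1} a clean zipping carries a PL $(d+1)$-ball to a PL $(d+1)$-ball; since a clean zipper is in particular proper, \Th~\ref{4.7} shows a clean zipping carries a PL $(d+1)$-sphere to a PL $(d+1)$-sphere; and by \Th~\ref{MT2} a removal carries a PL $(d+1)$-ball \emph{or} sphere to a PL $(d+1)$-ball. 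Hence: if $\De(\hat0,M(\hat1))$ is a ball, then $\De(\ol{P_i})$ is a PL $(d+1)$-ball for all $i$, in particular for $i=t$; and if $\De(\hat0,M(\hat1))$ is a sphere, then $\De(\ol{P_i})$ stays a PL $(d+1)$-sphere until the first removal occurs and is a PL $(d+1)$-ball from then on, hence at $i=t$. This yields every assertion except the precise criterion for sphericity in the spherical case.

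To pin down that criterion, I would read off from the proof of \Th~\ref{th:convert} that a removal occurs at step $i+1$ exactly when the fibre $F_{i+1}$ of the order projection $\pi$ has size two, which by \eqref{eq:fibres} means $F_{i+1} = \{(p,\al),(p,\be)\}$ for some fixed point $p = M(p)$ with $p \le M(\hat1)$. So the remaining point is that $M$ has a fixed point if and only if it has a fixed point lying weakly below $M(\hat1)$: given $p = M(p)$, one has $p \neq \hat1$ because $M(\hat1) \cov \hat1$, so $p < \hat1$, and Lemma~\ref{le:lifting}(iii) with $x = p$, $y = \hat1$ gives $p \le M(\hat1)$. Thus the sequence contains a removal precisely when $M$ has a fixed point, i.e., precisely when $M$ is not a special matching. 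Since a PL ball and a PL sphere of the same (here positive) dimension are never PL homeomorphic, this turns the dichotomy above into the claimed ``if and only if.''

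The main obstacle will not be any single hard estimate — the substance is already packaged in \Ths~\ref{MT1}, \ref{MT2}, \ref{4.7}, and \ref{th:convert} — but rather getting the two directions of the ``if and only if'' exactly right: one must argue both that a non-special SPM genuinely forces a removal into the sequence (this is where the lifting property re-enters, to push a fixed point below $M(\hat1)$) and that, conversely, a special matching makes the conversion of \Th~\ref{th:convert} consist of clean zippings only, so that a PL sphere is preserved all the way to $\De(\ol{P})$.
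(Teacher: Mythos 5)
Your proof is correct and follows the same strategy as the paper: start from Lemma~\ref{le:extension}, apply Theorem~\ref{th:convert}, and propagate the topology step by step via Theorems~\ref{4.7}, \ref{MT1}, and \ref{MT2}. You supply one detail the paper leaves implicit (it merely says that the removal/fixed-point correspondence ``follows from the proof of Theorem~\ref{th:convert}''), namely the appeal to Lemma~\ref{le:lifting}(iii) to show that any fixed point of $M$ lies weakly below $M(\hat1)$, so that a non-special SPM genuinely produces a two-element fibre and hence a removal.
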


\begin{proof}
It follows from Lemma~\ref{le:extension} that \smash{$\De\Big(\ol{[\hat0,M(\hat1)] \times \2}\Big)$} is a PL $(d+1)$-ball (sphere) if
$\De(\hat0,M(\hat1))$ is a PL $d$-ball (sphere).
According to \Th~\ref{th:convert}, a sequence of clean zippings and removals converts $[\hat0,M(\hat1)] \times \2$ into $P$.
Moreover, removals are used precisely when $M$ has fixed points; this follows from the proof of \Th~\ref{th:convert}.

By \Ths~\ref{4.7}, \ref{MT1}, and \ref{MT2}, $\De(\ol{P})$ is a PL $(d+1)$-ball or sphere, the latter occurring precisely when
$\De(\hat0,M(\hat1))$ is a sphere and $M$ has no fixed points, i.e., is a special matching.
\end{proof}

Let us now formally define the notions of zircons and pircons, which were discussed in the introduction.
Given a poset $P$, recall that $P_{\leq x}=\{y \in P \mid y \leq x\}$.

\begin{Defn}\label{de:zircon}
A poset $P$ is a \emph{zircon} if, for every non-minimal element $x \in P$, the order ideal $P_{\leq x}$ is finite and admits a special matching.
\end{Defn}

Actually, Marietti~\cite{Marietti} originally defined zircons in a slightly different way.
His definition and \Def~\ref{de:zircon} are, however, equivalent; see \cite[\Pro~2.3]{Hultman_zircon}.
It is obvious how to generalise this to the SPM setting:

\begin{Defn}
A poset $P$ is a \emph{pircon} if, for every non-minimal element $x \in P$, the order ideal $P_{\leq x}$ is finite and admits an SPM.
\end{Defn}

Clearly, zircons are pircons. Recall from the introduction that all open intervals in zircons are topological spheres. This characterises zircons among pircons:

\begin{Thm} \label{th:pircon}
Suppose $P$ is a pircon and $x<y$ in $P$. Then $\dxy$ is a PL ball or a PL sphere.
Moreover, there exist $x<y$ in $P$ such that $\dxy$ is a ball if and only if $P$ is not a zircon.
\end{Thm}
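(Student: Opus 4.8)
The plan is to derive both assertions from Theorem~\ref{th:main} by induction on $|P_{\le y}|$, the only extra ingredient being that special partial matchings restrict to suitable intervals. So I would first record the following: if $M$ is an SPM on a finite poset $Q$ with $\hat1$ and $a\in Q$ satisfies $M(a)\ge a$, then $M$ maps $[a,\hat1]$ into itself and $M|_{[a,\hat1]}$ is an SPM on $[a,\hat1]$. For the first part, if $z\in[a,\hat1]$ with $z>a$, then either $M(z)\ge z\ge a$, or $M(z)\le z$ and Lemma~\ref{le:lifting}(iii) applied to the pair $a<z$ gives $a\le M(z)$; the case $z=a$ is the hypothesis. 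Since $[a,\hat1]$ is a convex subposet of $Q$, cover relations are inherited, and the four defining conditions for $M|_{[a,\hat1]}$ then follow at once from those for $M$. I expect this restriction lemma, together with the bookkeeping required to discard degenerate configurations, to be the bulk of the work; all the topology is packaged in Theorem~\ref{th:main}.

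For the first assertion I induct on $|P_{\le y}|$. If $y$ covers $x$, then $\De(x,y)$ is the complex $\{\0\}$, a PL $(-1)$-sphere, and we are done. Otherwise pick an SPM $M$ on $P_{\le y}$ (it exists because $y$ is non-minimal). If $M(x)\ge x$, then $N:=M|_{[x,y]}$ is an SPM on $[x,y]$ with $N(y)=M(y)<y$; as $|P_{\le M(y)}|<|P_{\le y}|$, the inductive hypothesis makes $\De(x,M(y))$ a PL ball or sphere (if $M(y)=x$, then $x$ is covered by $y$ and we are in the case already handled), and Theorem~\ref{th:main}, applied to the poset $[x,y]$, upgrades this to the statement that $\De(x,y)=\De(\ol{[x,y]})$ is a PL ball or sphere. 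If instead $M(x)<x$, then $M(x)\cov x$ and $M(M(x))=x>M(x)$, so $N:=M|_{[M(x),y]}$ is an SPM on $[M(x),y]$; since $|P_{\le M(y)}|<|P_{\le y}|$, induction makes $\De(M(x),M(y))$ a PL ball or sphere, so by Theorem~\ref{th:main} the complex $\De(\ol{[M(x),y]})$ is a PL manifold. The link of its vertex $\{x\}$ is therefore a PL ball or sphere; but that link equals $\De(M(x),x)\join\De(x,y)=\De(x,y)$, because $(M(x),x)=\0$. This proves the first assertion.

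For the ``moreover'' clause, one implication is the classical fact --- recalled in the introduction, and in fact re-proved by the induction above --- that every open interval in a zircon is a PL sphere: if $P$ is a zircon, then every SPM occurring in the induction is a restriction of a fixed-point-free special matching, hence fixed-point-free itself, so the sphere alternative of Theorem~\ref{th:main} always applies and no $\De(x,y)$ can be a ball. For the converse, suppose $P$ is not a zircon. Then some principal ideal $P_{\le x}$ admits an SPM but no special matching, so every SPM $M$ on $P_{\le x}$ has a fixed point $w=M(w)$, and $w\ne x$ because $M(x)\cov x$. Choose a minimal element $a$ of $P$ with $a\le w$ (possible since $P_{\le w}$ is finite). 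As $a$ is minimal it cannot be matched downwards, so $M(a)\ge a$, and the restriction lemma provides an SPM $N:=M|_{[a,x]}$ on $[a,x]$. Now $N$ fixes $w\in[a,x]$, so it is not a special matching; moreover $[a,x]$ is non-trivial, since otherwise $[a,x]=\{a,x\}$ would force $N$ to be fixed-point-free. By the first assertion $\De(a,M(x))$ is a PL ball or sphere, so Theorem~\ref{th:main} forces $\De(a,x)$ to be a PL ball, the sphere alternative being excluded by the fixed point of $N$. Hence $a<x$ and $\De(a,x)$ is a ball, as required.
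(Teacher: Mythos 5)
Your proof is correct, and its overall strategy — induction, an application of Theorem~\ref{th:main}, and a link argument — matches the paper's, but your bookkeeping is organised differently in a way worth noting. The paper first uses the lifting property to show that each principal ideal $P_{\le y}$ has a unique minimum $\hat0$, so that $P_{\le y}=[\hat0,y]$; it then applies Theorem~\ref{th:main} to $[\hat0,y]$ and extracts $\dxy$ as a link of a saturated chain from $\hat0$ to $x$ in $\De(\hat0,y)$. You instead isolate a restriction lemma (also a direct consequence of the lifting property, Lemma~\ref{le:lifting}(iii)): an SPM $M$ on a poset with $\hat1$ restricts to an SPM on $[a,\hat1]$ whenever $M(a)\ge a$. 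This lets you apply Theorem~\ref{th:main} directly to the local interval $[x,y]$ when $M(x)\ge x$, or to $[M(x),y]$ followed by one link when $M(x)\cov x$, and it sidesteps the unique-minimum observation entirely in the first part. In the ``moreover'' direction you again use the restriction lemma to restrict a fixed-point-bearing SPM on $P_{\le x}$ down to $[a,x]$ for a minimal $a\le w$; the paper instead invokes the unique minimum it already established. Both routes rest on the same pillars, but your restriction lemma is a clean, reusable packaging of the lifting-property input, and the local-interval induction is a slight streamlining. I checked the delicate points: $a<M(x)$ (and more generally $x<M(y)$) always holds in the non-base cases by lifting, the link computation $\lk_{\De(\ol{[M(x),y]})}(\{x\})=\De(x,y)$ is correct because $(M(x),x)=\0$, and your observation that a two-element $[a,x]$ would force $M(x)=a$ and hence $M(a)=x\ne a$ does rule out the degenerate case, so the fixed point $w$ genuinely lands strictly inside. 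The argument goes through.
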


\begin{proof}
First, observe that every principal order ideal $P_{\leq y}$ has a unique minimum.
Indeed, the lifting property shows that every minimal element in $P_{\leq y}$ also belongs to $P_{\leq M(y)}$, where $M$ is an SPM on $P_{\leq y}$.
The observation now follows by induction on the cardinality of a longest chain in the ideal.

Let $\hat0$ be the minimum of $P_{\leq y}$. Using similar induction, we may assume $\De(\hat0,M(y))$ is a PL ball or sphere.
By \Th~\ref{th:main}, $\De(\hat0,y)$ is a PL ball or sphere, too. The same holds for $\dxy$ since it is a link in $\De(\hat0,y)$.

For the final statement, we know that open intervals in zircons are spheres. On the other hand, if $P$ is not a zircon, some $P_{\leq y}$ admits an SPM with fixed points.
\Th~\ref{th:main} then shows that $\De(\hat0,y)$ is a ball, where again $\hat0$ is the minimum of $P_{\leq y}$.
\end{proof}

\section{Pircons in Coxeter group theory} \label{se:coxeter}

In this section, we demonstrate how \Th~\ref{th:pircon} can be applied to certain posets appearing in Coxeter group theory.
Acquaintance with the basics of this theory, as explained for example in \cite{B-B} or \cite{Humphreys}, is assumed.

\subsection{Twisted identities} \label{sse:twist}

As a first application, we shall prove \cite[\Con~6.3]{Hultman3}. The reader may consult \cite{Hultman3} for context.
Here we only describe the necessary ingredients for the statement and its proof.

Let $(W,S)$ be a Coxeter system with an involutive automorphism $\t$. Define two subsets of $W$ as follows. The set of \emph{twisted involutions} is
\[
\twist=\{w \in W \mid \t(w)=w^{-1}\},
\]
and the set of \emph{twisted identities} is
\[
\twid=\{\t(w)w^{-1} \mid w \in W\}.
\]
It is clear that $\twid \seq \twist$.

Say that $\t$ has the \emph{no odd flip}, or \emph{NOF}, \emph{property} if $s\t(s)$ has even or infinite order for every $s \in S$ with $s \neq \t(s)$.\footnote{This means that $\t$ does not flip any edges with odd labels in the Coxeter graph.}
For any $X \seq W$, let $\Br(X)$ denote the subposet of the Bruhat order on $W$ which is induced by $X$.
The identity element $e \in W$ is the minimum in $\Br(W)$, hence in $\Br(\twid)$.

The poset $\Br(\twist)$ is always graded; denote its rank function by $\rho$. Whenever $\Br(\twid)$ is graded, its rank function is the restriction of $\rho$.
Furthermore, $\Br(\twid)$ is graded if $\t$ satisfies the NOF property \cite{Hultman3}.

When $W$ is of type~$A_{2n+1}$ and $\t$ is the unique non-trivial involution, \cite[\Th~4.3]{A-H} shows that $\Br(\twid)$ is a pircon.
This is generalised substantially in the next result. The main proof ideas are, however, the same.

\begin{Thm} \label{th:nof}
If $\t$ has the NOF property, then $\Br(\twid)$ is a pircon.
\end{Thm}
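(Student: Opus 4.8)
The plan is to show that for every non-identity $w \in \twid$, the principal order ideal $\Br(\twid)_{\leq w}$ is finite and carries an SPM. Finiteness is immediate: $\Br(\twid)_{\leq w}$ is contained in $\Br(W)_{\leq w}$, which is a finite interval in a Coxeter group in the Bruhat order. So the real content is producing the SPM. The natural construction, following the proof idea of \cite[\Th~4.3]{A-H}, is to build the matching from right multiplication by a descent: choose a simple reflection $s \in S$ that is a right descent of $w$ (so $ws < w$ in the Bruhat order on $W$), and define a candidate partial matching $M$ on $\Br(\twid)_{\leq w}$ by the usual ``conjugation-style'' rule adapted to twisted identities, namely $u \mapsto \t(s)\,u\,s$ when this stays $\leq w$ and drops or raises rank appropriately, and $u \mapsto u$ otherwise. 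One must be careful about the two cases $s \neq \t(s)$ and $s = \t(s)$, and it is precisely here that the NOF property enters: when $s \neq \t(s)$, the order of $s\t(s)$ being even or infinite is what guarantees that the map $u \mapsto \t(s)us$ behaves like a matching on $\twid$ (in the odd case one would instead get a length change of the wrong parity, breaking the cover-relation condition in Definition~\ref{de:spm}). I would isolate this as the combinatorial heart of the argument.

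The key steps, in order, are as follows. First, recall or re-derive the basic exchange/lifting facts for $\Br(\twid)$ inside $\Br(\twist)$: since $\Br(\twist)$ is graded with rank function $\rho$, and since $\Br(\twid)$ is graded under NOF with rank function the restriction of $\rho$, covers in $\Br(\twid)$ are covers in $\Br(\twist)$, and one has control over how $\rho$ changes under the twisted conjugation $u \mapsto \t(s)us$. Second, fix a right descent $s$ of $w$ and verify that the twisted-conjugation map is well-defined as a self-map of $\Br(\twid)_{\leq w}$ — this uses that $\twid$ is closed under $u \mapsto \t(s)us$ (an easy check from the definition $\twid = \{\t(v)v^{-1}\}$: if $u = \t(v)v^{-1}$ then $\t(s)us = \t(sv)(sv)^{-1}$ when $s = \t(s)$, with the appropriate analogue otherwise) and that it respects the ideal $\{\cdot \leq w\}$ via the subword property. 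Third, verify the four bullet conditions of Definition~\ref{de:spm}: involutivity is formal; $M(w) \cov w$ because $s$ is a descent of $w$; the trichotomy $M(u) \cov u$, $M(u) = u$, or $u \cov M(u)$ is exactly where the NOF parity control on $\rho$ is used; and the final ``lifting'' condition $x \cov y,\ M(x) \neq y \To M(x) < M(y)$ follows from monotonicity of Bruhat order under the subword property together with a rank-count argument. Fourth, having established that every non-trivial principal ideal of $\Br(\twid)$ admits an SPM, conclude by Definition (pircon) that $\Br(\twid)$ is a pircon.

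The main obstacle I anticipate is the third step, specifically verifying the SPM axioms for the twisted-conjugation map in the mixed situation — when $s$ is a descent of $w$ but for some $u \leq w$ the element $\t(s)us$ is incomparable in length-parity to $u$, or when passing between the $s = \t(s)$ and $s \neq \t(s)$ regimes forces a case split. The NOF hypothesis is exactly the condition that rules out the bad parity, so the proof must pinpoint where ``$s\t(s)$ has even or infinite order'' is invoked and show it yields the needed covering relations. A secondary, more bookkeeping-type difficulty is handling the possibility that the chosen descent $s$ of $w$ satisfies $s = \t(s)$: then the matching is closer to Brenti's original special matchings and the verification simplifies, but one still needs the argument to produce a genuine SPM (possibly with fixed points) rather than a complete matching, so the trichotomy clause must be checked rather than assumed. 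Once these verifications are in place, the theorem follows formally; no further topological input is needed here, since the topological payoff is deferred to the application of Theorem~\ref{th:pircon}.
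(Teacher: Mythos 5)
Your construction is exactly the one the paper uses: for a right descent $s$ of $w$, set $M(x)=\t(s)xs$ on $\Br(\twid)_{\leq w}$. (As a small correction, the identity $\t(s)\bigl(\t(v)v^{-1}\bigr)s=\t(sv)(sv)^{-1}$ holds for every $s\in S$, not only when $s=\t(s)$, so closedness of $\twid$ under twisted conjugation requires no case split.) Where your plan is genuinely incomplete is in the verification of the SPM axioms, and in particular in pinning down how the NOF hypothesis is actually used.

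The heart of the paper's argument is not a direct rank-parity calculation on $\Br(\twid)$ but a comparison with the ambient zircon $\Br(\twist)_{\leq w}$. By a theorem of Hultman, the map $\f(x)=\t(s)xs$ (replaced by $xs$ when $\t(s)xs=x$) is a genuine special matching on $\Br(\twist)_{\leq w}$. One shows $M$ coincides with $\f$ except at fixed points, so $M$ preserves the ideal $\Br(\twid)_{\leq w}$ by lifting applied to $\f$, and one has the crucial fact that $M(x)=x$ forces $\f(x)>x$. NOF enters in two places: it gives gradedness of $\Br(\twid)$ with the rank function inherited from $\Br(\twist)$ (so covers in $\Br(\twid)$ are covers in $\Br(\twist)$), and it furnishes a covering lemma (Lemma~4.5 of \cite{Hultman3}) saying that an element of $\twist\setminus\twid$ can cover at most one twisted identity. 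The fourth SPM axiom is then proved by a case analysis that reduces to two ostensibly bad configurations, $M(x)=x$, $M(y)<y$ and $M(x)>x$, $M(y)=y$; the first contradicts lifting for $\f$ together with $\f(x)>x$, and the second contradicts the covering lemma. Your plan describes this step only as ``monotonicity plus a rank count,'' and your diagnosis that NOF is needed to prevent ``a length change of the wrong parity'' is misplaced: in the non-NOF $A_4$ counterexample in the paper, the offending ideal is still graded, yet admits no SPM. So the gap is concrete: without the bridge to the special matching $\f$ on $\Br(\twist)$, without the fact $M(x)=x\Rightarrow\f(x)>x$, and without the covering lemma, the fourth axiom does not follow, and the proposed rank-parity heuristic does not capture what NOF actually provides.
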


\begin{proof}
Choose $w \in \twid$ and $s \in S$ such that $ws<w$ in the Bruhat order. For $x \in \Br(\twid)_{\leq w}$, put $M(x)=\t(s)xs$.
We shall prove that $M$ is an SPM on this (finite) order ideal.

Observe that
\[
M(x)=\begin{cases}\f(x) & \h{if $\f(x) \in \twid$,} \\ x & \h{otherwise,}\end{cases}
\]
where the map
\[
\f(x)=\begin{cases}xs & \h{if $M(x)=x$,} \\ M(x) & \h{otherwise}\end{cases}
\]
is a special matching on $\Br(\twist)_{\leq w}$ by \cite[\Th~4.5]{Hultman2}.
Hence, $M$ preserves $\Br(\twid)_{\leq w}$ by the lifting property applied to $\f$.

It follows from \cite{Hultman3} that for $x \in \Br(\twid)$,
\begin{equation} \label{eq:fact}
M(x)=x \To \f(x)>x.
\end{equation}
Therefore, the second property of an SPM (see \Def~\ref{de:spm}) holds, and the first and third properties are readily checked. It remains to verify the fourth.

Suppose $x \cov y$ in $\Br(\twid)_{\leq w}$ and $M(x) \neq y$. Since $\Br(\twid)$ has the induced rank function of $\Br(\twist)$, $x \cov y$ in $\Br(\twist)_{\leq w}$, too.
We have to show that $M(x)<M(y)$. Since $\f$ is a special matching, this is obvious if $M(x) \neq x$ and $M(y) \neq y$.
Apart from some trivial cases, we thus have to consider (1) $M(x)=x$ and $M(y)<y$, and (2) $M(x)>x$ and $M(y)=y$. However, we shall see that both cases are impossible.

In the former case, by \eqref{eq:fact} we have $\f(x)>x \neq \f(y)<y$, which contradicts the lifting property.
In the latter case, \eqref{eq:fact} implies $\f(y) \voc y$.
Since $\f(y) \voc \f(x)$, too, we have a contradiction because according to \cite[Lemma~4.5]{Hultman3}, under the NOF assumption, an element in $\twist-\twid$ can cover at most one twisted identity in $\Br(\twist)$.
\end{proof}

\begin{Remark}
In general, \Th~\ref{th:nof} is false without the NOF assumption.
For example, suppose $W$ is of type~$A_4$ with generating set $S=\{s_1,s_2,s_3,s_4\}$ such that $s_1s_2$, $s_2s_3$, and $s_3s_4$ have order $3$, and all other generator pairs commute.
Let $\t$ be the unique non-trivial involution of $(W,S)$, mapping $s_i$ to $s_{5-i}$. Define $w=s_2s_1s_3s_2s_4s_3$.
One readily checks that $\Br(\twist)_{\leq w}$ is isomorphic to the rank $3$ boolean lattice, and that $\Br(\twid)_{\leq w}$ is obtained from $\Br(\twist)_{\leq w}$ by removing the rank $2$ element $s_2s_3s_2$.
The resulting poset does not admit an SPM, hence $\Br(\twid)$ cannot be a pircon.
\end{Remark}

In light of \Th~\ref{th:pircon}, \Th~\ref{th:nof} immediately implies the following result, which is the previously mentioned conjecture.

\begin{Cor}[\mbox{\cite[\Con~6.3]{Hultman3}}] \label{co:main}
Suppose $\t$ has the NOF property and let $I$ be an open interval in $\Br(\twid)$. Then $\De(I)$ is a PL ball or a PL sphere.
\end{Cor}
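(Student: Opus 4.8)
The plan is to read the corollary off directly from Theorem~\ref{th:nof} and Theorem~\ref{th:pircon}, so the only real work is to unwind the statement and check that the hypotheses line up. First, an open interval $I$ in $\Br(\twid)$ is by definition $(x,y)=\{z\in\Br(\twid)\mid x<z<y\}$ for some pair $x<y$ in $\Br(\twid)$, so $\De(I)$ is precisely the complex written $\dxy$ in Section~\ref{sse:complexes}. The task therefore reduces to showing that $\dxy$ is a PL ball or a PL sphere for every such pair.

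Next I would invoke Theorem~\ref{th:nof}: under the NOF hypothesis, $\Br(\twid)$ is a pircon. Theorem~\ref{th:pircon} then applies to the poset $P=\Br(\twid)$ and the given pair $x<y$, and its conclusion is exactly that $\dxy$ is a PL ball or a PL sphere. This completes the argument. There is essentially no obstacle at this stage; all the substance has been front-loaded: into the proof of Theorem~\ref{th:nof}, where one verifies that $M(x)=\t(s)xs$ defines an SPM on each finite principal order ideal $\Br(\twid)_{\leq w}$ (resting on the lifting property, Lemma~\ref{le:lifting}, together with the special-matching results of \cite{Hultman2,Hultman3}), and into Theorem~\ref{th:pircon}, whose proof passes the pircon structure through Theorem~\ref{th:main}. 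If one wishes to be careful about degenerate cases, the only point to record is that when $y$ covers $x$ the interval $I$ is empty and $\De(I)=\{\0\}$ is the PL $(-1)$-sphere, which still falls under the stated dichotomy; this is already absorbed into the base of the induction in the proof of Theorem~\ref{th:pircon}.
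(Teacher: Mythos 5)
Your proposal is correct and is exactly the argument the paper uses: Theorem~\ref{th:nof} shows $\Br(\twid)$ is a pircon under the NOF assumption, and Theorem~\ref{th:pircon} then yields that every open interval has order complex a PL ball or sphere. The paper states this as an immediate consequence without further elaboration, so your write-up (including the harmless degenerate-case remark) matches the intended proof.
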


\begin{Remarks}
${}$

1. Can, Cherniavsky, and Twelbeck~\cite{C-C-T} established \Co~\ref{co:main} for $W$ of type~$A_{2n+1}$ using shellability methods.

2. It follows from \cite[\Th~4.12]{Hultman3} that $\De(I)$ is a sphere precisely when $I$ is \emph{full}, meaning that it coincides with an interval in $\Br(\twist)$, i.e., $I=\{x \in \twid \mid u<x<w\}=\{x \in \twist \mid u<x<w\}$ for some $u,w \in \twid$.

3. The remark after \Th~\ref{th:nof} shows that $\Br(\twid)$ is not a pircon if $W$ is of type~$A_{2m}$, $m \geq 2$, with $\t \neq \id$.
It is, however, an open question whether the open intervals are PL balls or spheres. This is not true for arbitrary $W$ and $\t$.
For example, as shown in \cite[\Exa~4.7]{Hultman3}, if $W$ is of type~$\widetilde{A}_2$ with $\t \neq \id$, there are intervals in $\Br(\twid)$ which are not even graded.
\end{Remarks}

\subsection{Quasiparabolic \texorpdfstring{$W$}{}-sets} \label{sse:quasi}

Our second application concerns quasiparabolic $W$-sets as introduced by Rains and Vazirani~\cite{R-V} as a context to which many nice properties of parabolic quotients extend.
Let us recall some crucial definitions and results from \cite{R-V}. The reader should consult the original source for much more background and motivation.

Again $(W,S)$ denotes a Coxeter system.
Say that $X$ is a \emph{scaled $W$-set} if $X$ is a (left) $W$-set equipped with a function $\hgt: X \to \Z$ such that $|\hgt (sx)-\hgt(x)| \leq 1$ for all $x \in X$ and all $s \in S$. An element $x \in X$ is called \emph{$W$-minimal} if $\hgt(x) \leq \hgt(sx)$ for all $s \in S$.
Say that $X$ is \emph{bounded from below} if the function $\hgt$ is bounded from below.

Let $T=\{wsw^{-1} \mid w \in W, \: s \in S\}$ denote the set of reflections.

\begin{Defn}[\mbox{\cite[\Def~2.3]{R-V}}] \label{de:QB}
A scaled $W$-set $X$ is called \emph{quasiparabolic} if it satisfies the following two properties.
\begin{enumerate}
  \item For all $t \in T$ and $x \in X$, if $\hgt(tx)=\hgt(x)$, then $tx=x$.
  \item For all $t \in T$, $x \in X$, and $s \in S$, if $\hgt(tx)>\hgt(x)$ and $\hgt(stx)<\hgt(sx)$, then $tx=sx$.
\end{enumerate}
\end{Defn}

\begin{Lemma}[\mbox{\cite[\Co~2.10]{R-V}}] \label{le:minmax}
Each orbit of a quasiparabolic $W$-set contains at most one $W$-minimal element.
\end{Lemma}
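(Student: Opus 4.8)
The plan is to deduce the lemma from a monotonicity property of the height function along a single $W$-orbit. Concretely, I would first prove the following: \emph{if $x \in X$ is $W$-minimal, then $\hgt(wx) \geq \hgt(x)$ for every $w \in W$, with equality if and only if $wx = x$}. Granting this, the lemma is immediate. If $x$ and $y$ are $W$-minimal elements of one orbit, write $y = wx$; applying the statement to $(x,w)$ gives $\hgt(y) \geq \hgt(x)$, and applying it to $(y,w^{-1})$ (and using $w^{-1}y = x$) gives $\hgt(x) \geq \hgt(y)$. Hence $\hgt(wx) = \hgt(y) = \hgt(x)$, and the equality clause forces $y = wx = x$.

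I would prove that statement by induction on $\ell(w)$, carrying the inequality and the equality clause along together. Fix a reduced word $w = s_1 \cdots s_k$ and set $h_j = \hgt(s_{k-j+1} \cdots s_k \, x)$, so $h_0 = \hgt(x)$, $h_k = \hgt(wx)$, and $|h_j - h_{j-1}| \leq 1$ since $X$ is scaled. For the inequality, suppose $h_k < h_0$ and let $j$ be least with $h_j < h_0$. As $x$ is $W$-minimal, $h_1 \geq h_0$, so $j \geq 2$; then $h_{j-1} = h_0$ and $h_j = h_0 - 1$. The element $s_{k-j+2} \cdots s_k$ is shorter than $w$ and carries $x$ to a point of height $h_{j-1} = \hgt(x)$, so the equality clause of the induction hypothesis identifies that point with $x$; but then $s_{k-j+1}$ strictly decreases the height of $x$, contradicting $W$-minimality. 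Hence $h_k \geq h_0$.

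For the equality clause, assume $h_k = h_0$. The cases $\ell(w) \leq 1$ follow from $W$-minimality and property~(1) of \Def~\ref{de:QB}, and the inequality just proved gives $h_j \geq h_0$ for all $j$. If some step preserves the running height, say $h_j = h_{j-1}$, then property~(1) (with $t = s_{k-j+1} \in T$) shows that this generator fixes $s_{k-j+2} \cdots s_k \, x$, so it may be deleted; the resulting element is strictly shorter, still sends $x$ to a point of height $\hgt(x)$, and the induction hypothesis applies. Hence we may assume that no step preserves the running height. The smallest such configuration is $w = s_1 s_2$ with $\hgt(s_2 x) = \hgt(x) + 1$ and $\hgt(s_1 s_2 x) = \hgt(x)$; there property~(2), applied with $t = s_2$, the point $x$, and the generator $s_1$, yields $s_2 x = s_1 x$ --- so $wx = x$ --- unless $\hgt(s_1 x) = \hgt(x)$, in which case property~(1) gives $s_1 x = x$, whence $wx = s_1 s_2 x = (s_1 s_2 s_1) x$ with $s_1 s_2 s_1 \in T$ preserving the height of $x$, and property~(1) once more gives $wx = x$.

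The step I expect to be the genuine obstacle is the remaining part of the equality clause: the case in which no generator of any reduced word for $w$ can be spliced out, so that every interior running height strictly exceeds $\hgt(x)$. Then $w$ has a generator raising the Coxeter length while lowering the height, and one must trace this discrepancy into a dihedral parabolic subgroup and resolve it there using property~(2), in the spirit of the rank-two computation above. Carrying out this reduction carefully is the heart of the argument; it is done in \cite[\S 2]{R-V}, and I would follow that development.
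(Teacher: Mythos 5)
The paper does not actually prove this lemma; it is quoted verbatim from Rains and Vazirani as \cite[Cor.~2.10]{R-V}, so there is no in-paper argument to compare against. Judged on its own terms, your reduction of the lemma to the Claim (``if $x$ is $W$-minimal, then $\hgt(wx)\geq\hgt(x)$ for all $w$, with equality iff $wx=x$'') is clean and correct, and so are the parts of the induction you carry out: the inequality half, the ``splice out a flat step'' reduction, and the $\ell(w)=2$ computation with property~(2) of \Def~\ref{de:QB}.

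However, the proof as written has a genuine gap, which you flag yourself: the equality clause when $\ell(w)\geq 4$, no step of the running height sequence is flat, and all interior values strictly exceed $\hgt(x)$. This is not a routine extension of the $\ell(w)=2$ case. At a local maximum $h_{j-1}<h_j>h_{j+1}$, the natural invocation of property~(2) (with $t=s_{k-j+1}$, base point $y=s_{k-j+2}\cdots s_k x$, and generator $s=s_{k-j}$) requires comparing $\hgt(s\,ty)$ with $\hgt(sy)$, and nothing said so far controls $\hgt(sy)$. In the subcase $\hgt(sy)=\hgt(y)-1$, neither property~(1) nor property~(2) applies directly, and the conjugation trick you used at length two does not obviously close the case (and must then be iterated over all the peaks of the height profile). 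Resolving this is exactly the content of the exchange-type machinery Rains and Vazirani develop in \cite[\S2]{R-V} before deriving Corollary~2.10, so deferring to that source leaves the key step unproved rather than completing the argument.
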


Suppose now that $X$ is quasiparabolic with a $W$-minimal element $x_0$. Assume, \WLOGtwo, that $\hgt(x_0)=0$.
If $y \in X$ with $\hgt(y)=k$, then $s_1 \cd s_kx_0$ is a \emph{reduced expression} for $y$ if $y=s_1 \cd s_kx_0$ for some $s_i \in S$.
All elements in the orbit of $x_0$ have reduced expressions \cite{R-V}.
Define the \emph{Bruhat order} $\leq$ on $X$ as follows.

\begin{Defn}[\mbox{\cite[\Th~5.15]{R-V}}] \label{de:Bruhat}
Let $y=s_1 \cd s_k x_0$ be a reduced expression. Then
\[
\h{$x \leq y \iff x=s_{i_1} \cd s_{i_j}x_0$ for some $1 \leq i_1<\cd<i_j \leq k$.}
\]
\end{Defn}

In particular, elements in different $W$-orbits are incomparable.
Although not obvious from \Def~\ref{de:Bruhat}, the Bruhat order is a partial order on $X$, which we denote by $\Br(X)$; it is graded with rank function $\hgt$ \cite{R-V}.
In particular, $W$-minimal elements are minimal in the Bruhat order.

Again there is a ``lifting property'':

\begin{Lemma}[\mbox{\cite[Lemma~5.7]{R-V}}] \label{le:lift}
Suppose $x,y \in X$ and $s \in S$. If $x \leq y$ and $sx \not\leq sy$, then $sx \leq y$ and $x \leq sy$.
\end{Lemma}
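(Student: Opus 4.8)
The plan is to follow the classical proof of the lifting property for Coxeter groups (see \cite{B-B}), which transfers to the quasiparabolic setting essentially as in \cite{R-V}. Throughout, let $x_0$ be the $W$-minimal element of the orbit containing $x$ and $y$ (the two are comparable, hence in a common orbit; uniqueness of $x_0$ is Lemma~\ref{le:minmax}), normalised so that $\hgt(x_0)=0$, and recall that reduced expressions and the Bruhat order refer to $x_0$ as in \Def~\ref{de:Bruhat}. The two tools are that subword description of $\leq$ and an \emph{exchange statement}: if $sy<y$, then $y$ has a reduced expression of the form $y=s\,s_2\cdots s_k x_0$, so that $sy=s_2\cdots s_k x_0$ is again reduced.

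First I would reduce to the configuration $sy\leq y$ with $sx\geq x$. Since $s\in S\subseteq T$, the first property in \Def~\ref{de:QB} forces $sz=z$ whenever $\hgt(sz)=\hgt(z)$; together with $|\hgt(sz)-\hgt(z)|\leq1$ this means that for every $z$ exactly one of $sz<z$, $sz=z$, $sz>z$ holds. Running through the possibilities for $(x,y)$, one checks that if $sy>y$ then in fact $sx\leq sy$ already (apply the exchange statement to $sy$, whose reduced expression begins with $s$ because $s(sy)<sy$, and embed a subexpression for $sx$), and that if $sy<y$ with $sx<x$ one likewise gets $sx\leq sy$; in all such cases the hypothesis $sx\not\leq sy$ is vacuous, so nothing is to be proved.

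For the core case $sy<y$ I would write $y=s\,s_2\cdots s_k x_0$ reduced, so that $sy=s_2\cdots s_k x_0$ is reduced. By \Def~\ref{de:Bruhat}, $x\leq y$ means $x=s_{i_1}\cdots s_{i_j}x_0$ for some $1\leq i_1<\cdots<i_j\leq k$. If $i_1\geq2$, the same subexpression exhibits $x\leq sy$, and prepending the initial $s$ exhibits $sx=s\,s_{i_1}\cdots s_{i_j}x_0\leq y$; if $i_1=1$, then $sx=s_{i_2}\cdots s_{i_j}x_0$ is a subexpression of $s_2\cdots s_k x_0$, so $sx\leq sy$, against the hypothesis. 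Thus only the first alternative occurs, and both desired conclusions follow. The borderline subcases peculiar to $W$-sets are absorbed by the same bookkeeping: if $sx=x$ one still obtains $x=sx\leq sy$ in either alternative; and the case $sy=y$ is treated by noting (again from the first property in \Def~\ref{de:QB}) that no reduced expression of $y$ begins with $s$, and then combining the subword description with the second property in \Def~\ref{de:QB} applied along a saturated chain from $x$ up to $y$.

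The step I expect to be the main obstacle is the exchange statement itself, i.e.\ that $sy<y$ forces a reduced expression of $y$ beginning with $s$ whose truncation is reduced. For Coxeter groups this is the strong exchange condition; in a quasiparabolic $W$-set it is the corresponding result of \cite{R-V}, and its proof is precisely where the second property in \Def~\ref{de:QB} and Lemma~\ref{le:minmax} enter in an essential way, and where the possible presence of $s$-fixed points, with no counterpart in the group case, makes the argument more delicate. Granting it, everything else is routine subword manipulation.
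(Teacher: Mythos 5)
The paper does not prove this lemma; it is imported verbatim from \cite[Lemma~5.7]{R-V}, so there is no internal proof to compare with. Your route also inverts R-V's own logic: in the source, the Bruhat order is not \emph{defined} by subwords, and the subword characterisation (their Theorem~5.15) is deduced \emph{from} Lemma~5.7. Since the present paper adopts \Def~\ref{de:Bruhat} as the definition and takes its well-definedness on faith, your approach is internally consistent here, though it is not the path R-V themselves took.

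Two corrections. The ``exchange statement'' you flag as the main obstacle is in fact trivial: if $sy<y$, take any reduced expression $sy=t_2\cdots t_k x_0$; then $y=s\,t_2\cdots t_k x_0$ has exactly $\hgt(y)=k$ letters, hence is reduced, and no strong-exchange-type result (nor Lemma~\ref{le:minmax}, nor the second property in \Def~\ref{de:QB}) is needed for this step. The genuine delicacy lies instead in the case $sy=y$, which you dispose of in a single clause. Unwinding the statement, one must show that $sy=y$ together with $x\leq y$ already force $sx\leq y$; this is not a vacuity. Your ``second property of \Def~\ref{de:QB} along a saturated chain'' sketch is a workable plan, but as stated it is incomplete: along a chain $x=z_0\cov z_1\cov\cdots\cov z_m=y$ one needs the \emph{first} property of \Def~\ref{de:QB}, applied to the reflection $sts$ where $z_{i+1}=tz_i$, to rule out a direct jump from $sz_i>z_i$ to $sz_{i+1}=z_{i+1}$; one needs the second property at a transition from $sz_i>z_i$ to $sz_{i+1}<z_{i+1}$ to conclude $z_{i+1}=sz_i$; and one needs the nontrivial fact that for a reflection $r$, $\hgt(rz)>\hgt(z)$ forces $rz>z$, in order to propagate $sz_j<sz_{j+1}$ across consecutive ascents and arrive at $sx=sz_0\leq sz_i=z_{i+1}\leq y$. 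That last fact, not the simple-reflection exchange, is the strong-exchange-flavoured input from \cite{R-V} that you should be worried about. The rest of your reduction, and the core $sy<y$ case, are correct.
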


\begin{Thm} \label{th:quasipircon}
If $X$ is a quasiparabolic $W$-set bounded from below, then $\Br(X)$ is a pircon.
In particular, the order complex of every open interval in $\Br(X)$ is a PL ball or a PL sphere.
\end{Thm}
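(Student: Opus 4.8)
The plan is to verify that $\Br(X)$ satisfies the definition of a pircon, i.e., that every non-trivial principal order ideal $\Br(X)_{\leq y}$ is finite and admits an SPM; the topological consequence then follows immediately from \Th~\ref{th:pircon}. Finiteness is the easy part: since $\Br(X)$ is graded with rank function $\hgt$ and $X$ is bounded from below, for a fixed $y$ with $\hgt(y)=k$ the ideal $\Br(X)_{\leq y}$ consists of elements of height between $\inf \hgt$ and $k$; combined with \Def~\ref{de:Bruhat}, which exhibits each $x \leq y$ as a subword of a fixed reduced expression $s_1 \cdots s_k x_0$, there are only finitely many such $x$.

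For the SPM, I would mimic the construction in the proof of \Th~\ref{th:nof}. Fix $y \in X$ (non-minimal) and choose $s \in S$ with $sy < y$ in the Bruhat order; such an $s$ exists because $y$ is not $W$-minimal, hence not Bruhat-minimal, so some reduced expression for $y$ begins... more precisely, since $\hgt$ is the rank function and $y$ is not minimal, there is $s \in S$ with $\hgt(sy) = \hgt(y) - 1$, and then $sy < y$. Define $M \colon \Br(X)_{\leq y} \to \Br(X)_{\leq y}$ by $M(x) = sx$. I would first check that $M$ maps the ideal to itself: if $x \leq y$ then by \Def~\ref{de:Bruhat} either $sx < x$, or $sx > x$ in which case the lifting property (Lemma~\ref{le:lift}) applied with the roles arranged so that $x \leq y$ and $sx \not\leq sy$ forces $sx \leq y$ (one must rule out the case $sx \leq sy$, which would make $sx \leq sy < y$ anyway); the quasiparabolic axiom~(1) ensures $\hgt(sx) \neq \hgt(x)$, so $M$ has no fixed points. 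Then I would verify the four conditions of \Def~\ref{de:spm}: $M^2 = \id$ is clear since $s^2 = e$; $M(y) = sy \cov y$ holds by choice of $s$ and gradedness; the third condition (each $x$ satisfies $M(x) \cov x$ or $x \cov M(x)$) follows since $\hgt(sx) = \hgt(x) \pm 1$ and $\hgt$ is the rank function; and the fourth condition, $x \cov x'$ with $M(x) \neq x'$ implies $M(x) < M(x')$, is exactly the content of the lifting property Lemma~\ref{le:lift}, applied by noting that $M(x) \neq x'$ means $sx \neq x'$, and if $sx \not\leq sx'$ the lemma gives $sx \leq x'$, which combined with $sx \neq x'$ and a rank count gives $sx < sx'$; the remaining case $sx \leq sx'$ is immediate.

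The main obstacle I expect is the bookkeeping in the fourth SPM condition: one has to be careful that the inequality obtained from Lemma~\ref{le:lift} is strict and that all the degenerate sub-cases ($sx = x'$, $sx' = x$, $sx \leq sx'$ directly, etc.) are handled, using gradedness via $\hgt$ to convert $\leq$ into $<$ whenever the ranks differ. This is parallel to — and somewhat simpler than — the corresponding argument in \Th~\ref{th:nof}, because here the candidate matching $M(x) = sx$ is literally a special matching (no fixed points), so there are no complications from the interaction between two maps $\f$ and $M$ as in the twisted-identities case; in particular $\Br(X)$ is in fact a zircon, and every open interval is a PL sphere. Once $M$ is confirmed to be an SPM (indeed a special matching) on each $\Br(X)_{\leq y}$, the definition of pircon is met, and \Th~\ref{th:pircon} yields that $\De(I)$ is a PL ball or PL sphere for every open interval $I$, completing the proof.
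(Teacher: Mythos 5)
Your overall plan is the right one and matches the paper's approach: take $s \in S$ with $sy < y$, set $M(x) = sx$ on $\Br(X)_{\leq y}$, and verify the SPM axioms using the lifting property Lemma~\ref{le:lift} together with the gradedness of $\hgt$. However, there is a genuine error in the middle that propagates into a false final claim. You assert that quasiparabolic axiom (1) ``ensures $\hgt(sx) \neq \hgt(x)$, so $M$ has no fixed points.'' This reads the axiom backwards. \Def~\ref{de:QB}(1) says: \emph{if} $\hgt(tx) = \hgt(x)$ for a reflection $t$, \emph{then} $tx = x$. The scaled $W$-set condition $|\hgt(sx)-\hgt(x)| \le 1$ allows $\hgt(sx) = \hgt(x)$, and in that case axiom (1) forces $sx = x$, i.e.\ a \emph{fixed point} of $M$. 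Such fixed points are unavoidable: in the parabolic quotient $W^J$ (a quasiparabolic $W$-set bounded from below), $s(wW_J) = wW_J$ whenever $w^{-1}sw \in W_J$. Consequently your conclusion that ``$\Br(X)$ is in fact a zircon, and every open interval is a PL sphere'' is false --- Bruhat intervals in $W^J$ are in general PL balls rather than spheres, which is exactly why the theorem concludes \emph{pircon} rather than \emph{zircon}.

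Beyond that one error, the verification needs to be restated to accommodate fixed points. Your version of the third SPM condition omits the alternative $M(x)=x$ that \Def~\ref{de:spm} allows; the correct statement is ``$M(x) \cov x$, $M(x)=x$, or $x \cov M(x)$,'' and when $\hgt(sx) \neq \hgt(x)$ one needs to know $sx$ and $x$ are comparable, which the paper gets from \cite[Remark~5.2]{R-V}. Likewise, your argument that $M$ maps $\Br(X)_{\le y}$ to itself silently assumes $sx \neq x$; the fixed-point case $sx = x$ is of course trivially fine but should be admitted. Once these adjustments are made, the remaining steps --- including the rank count you gesture at in the fourth axiom --- line up with the paper's proof, where the sub-case $s_1x \not\le s_1y$ is finished by observing $s_1x \not> x$, hence $s_1x \le x < s_1y$.
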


\begin{proof}
Suppose $z\in X$ is a non-minimal element. Since $X$ is bounded from below, there is a minimal element $x_0<z$.
By Lemma~\ref{le:minmax}, $x_0$ is in fact unique since elements in different $W$-orbits are incomparable. Hence $\Br(X)_{\leq z}=[x_0,z]$.
By \Def~\ref{de:Bruhat}, $[x_0,z]$ is finite.
Choose a reduced expression $s_1 \cd s_kx_0$ for $z$. For $x \in [x_0,z]$, let $M(x)=s_1x$. We shall prove that $M$ is an SPM on $[x_0,z]$.
\begin{itemize}
  \item For all $x \leq z$, $s_1s_1x=x$. Thus $M^2=\id$.
  \item Since $\hgt(s_1z)=\hgt(s_2 \cd s_kx_0)=k-1$, $M(z) \cov z$. Lemma~\ref{le:lift} thus shows that $M(x) \leq z$ for all $x \leq z$.
  \item For all $x \leq z$, $s_1x$ and $x$ are comparable by \cite[Remark~5.2]{R-V}, and $|\hgt(s_1x)-\hgt(x)| \leq 1$. Hence, $M(x) \cov x$, $M(x)=x$, or $x \cov M(x)$.
  \item Suppose $x \cov y \leq z$ and $M(x) \neq y$. Then $s_1x \neq y$, $x \neq s_1y$, and $s_1x \neq s_1y$.
By Lemma~\ref{le:lift}, we either have $s_1x<s_1y$, or else $s_1x<y$ and $x<s_1y$. In the latter case, $s_1x \not> x$, so $s_1x \leq x<s_1y$.
Hence, in either case, $M(x)<M(y)$. \qedhere
\end{itemize}
\end{proof}

The topological conclusion of \Th~\ref{th:quasipircon} is implied by \cite[\Th~6.4]{R-V}, which claims CL-shellability of the intervals.
Unfortunately, the proof of that result has turned out to be flawed; see the discussion in \cite{C-C-T}.

A familiar example of a quasiparabolic $W$-set is the parabolic quotient $W^J$, $J \seq S$, which consists of the minimal length representatives of the left cosets of the parabolic subgroup $W_J$ in $W$.
In this setting, the topological conclusion of \Th~\ref{th:quasipircon} was established by Bj\"orner and Wachs~\cite{B-W_coxeter} using shellability techniques.

Other examples include several instances of $\twid$ (with $W$ acting by twisted conjugation, i.e., the action of $w$ on $x$ is given by $wx\t(w^{-1})$), including the odd rank type~$A$ case \cite{R-V}.
In fact, it seems possible that $\twid$ is always a quasiparabolic $W$-set with this action whenever $\t$ has the NOF property; if so, \Th~\ref{th:nof} would be a special case of \Th~\ref{th:quasipircon}.
We neither know of a proof nor of a counterexample.

\section{Open questions} \label{se:questions}

We conclude the paper with a couple of questions that suggest themselves naturally.

Clearly, all zircons and pircons have rank functions.\footnote{For zircons, the existence of a rank function is part of Marietti's~\cite{Marietti} original definition;
see the discussion after \Def~\ref{de:zircon}.}
Indeed, the rank of an element $x$ equals the dimension of the ball or sphere $\De(\hat0,x)$ plus two, where $\hat0$ is the unique minimal element below $x$;
the uniqueness was shown in the proof of \Th~\ref{th:pircon}.

Let $Z$ be a zircon with rank function $\rk$. For a non-minimal element $z \in Z$, let $M_z$ denote a fixed special matching on $Z_{\leq z}$.
Given an induced subposet $P \seq Z$ and $p \in P$, define
\[
M_p'(x)=\begin{cases}M_p(x) & \h{if $M_p(x) \in P$,} \\ x & \h{otherwise.}\end{cases}
\]
Suppose $M_p'$ is an SPM on $P_{\leq p}$ for every non-minimal element $p \in P$.
If, moreover, the restriction of $\rk$ to $P$ is a rank function of $P$, call $P$ an \emph{induced pircon} of $Z$.

It follows from the proof of \Th~\ref{th:nof} that every pircon of the form $\Br(\twid)$ is an induced pircon of the corresponding zircon $\Br(\twist)$.
Similarly, $\Br(W^J)$ is an induced pircon of $\Br(W)$ for any $J \seq S$.

\begin{Ques}
Is every pircon an induced pircon of some zircon?
\end{Ques}

A common way to establish topological consequences such as those of \Th~\ref{th:pircon} is to prove shellability.
Beginning with Bj\"orner~\cite{Bjorner}, there are several variations of lexicographic shellability; see, e.g., Wachs' survey~\cite{Wachs}.
Under this umbrella are gathered several similarly flavoured combinatorial methods that can be used to establish shellability of order complexes by means of certain labellings of the posets.

Concerning zircons, the following question is known to have an affirmative answer for $\Br(W)$ in arbitrary type \cite{B-W_coxeter},
as well as for $\Br(\twist)$ in types~$A$, $B$, and $D$ \cite{Incitti_A,Incitti_B,Incitti_D}.
For other pircons, it has been established for $\Br(W^J)$ \cite{B-W_coxeter} and for $\Br(\twid)$ in type~$A$ of odd rank \cite{C-C-T}.

\begin{Ques}
Is every interval in every pircon lexicographically shellable?
\end{Ques}

In case both the previous questions turn out to have affirmative answers, one may speculate that even more could be true.
The aforementioned result from \cite{C-C-T} can be interpreted in the following way.
For $W$ of type~$A_n$, Incitti~\cite{Incitti_A} established lexicographic shellability of $\Br(\twist)$ by producing an EL-labelling of this poset.
When $n$ is odd and $\t \neq \id$, Can, Cherniavsky, and Twelbeck proved that the restriction of this labelling to the induced pircon $\Br(\twid)$ is an EL-labelling, too.

\begin{Ques}
Is it true that every induced pircon has an EL-labelling which is induced from an EL-labelling of the corresponding zircon?
\end{Ques}

\section*{Acknowledgements}

N. Abdallah was funded by a stipend from the Wenner-Gren Foundations.

\bibliographystyle{amsplain}

\bibliography{Referenser_bara_initialer}

\end{document}